\documentclass[10pt, a4paper, reqno]{amsart}

\usepackage[utf8]{inputenc}

\usepackage[english]{babel}

\usepackage{amsmath}
\usepackage{amsfonts}
\usepackage{amssymb}
\usepackage{amsthm}
\usepackage[foot]{amsaddr}
\usepackage{mathtools}
\usepackage{bm, bbm}
\usepackage{bigints}
\usepackage{hyperref}
\usepackage[capitalise]{cleveref}
\usepackage{xparse}
\usepackage{graphicx}
\usepackage[percent]{overpic}
\usepackage[usenames,dvipsnames,svgnames,table]{xcolor}
\usepackage[scr=boondoxo, scrscaled=.98]{mathalfa}
\usepackage{dsfont}

\usepackage{enumerate
}

\usepackage[numbers]{natbib}

\nocite{*}

\usepackage[left=3cm, right=3cm, top=3.5cm, bottom=4cm]{geometry}

\DeclareDocumentCommand{\bmd}{O{s} O{y} O{0} O{t} O{u}}
{P \Big\{ B(#1) \in \mathrm d #2 \,\Big \vert \min_{#3\leq z \leq #4} B(z)> 0 , B(#3)=#5 \Big\}}

\DeclareDocumentCommand{\bmdd}{O{s} O{y} O{0} O{t} O{u}}
{P \Big\{ B^\mu(#1) \in \mathrm d #2 \,\Big \vert \min_{#3\leq z \leq #4} B^\mu(z)> 0 , B^\mu(#3)=#5 \Big\}}

\DeclareDocumentCommand{\bmdv}{O{s} O{y} O{0} O{t} O{u}}
{P \Big\{B(#1) \in \mathrm d #2 \,\Big \vert \min_{#3\leq z \leq #4} B(z)> v , B(#3)=#5 \Big\}}

\DeclareDocumentCommand{\bmdb}{O{s} O{y} O{0} O{t} O{u} O{v} }
{P \Big\{B(#1) \in \mathrm d #2 \,\Big \vert \min_{#3\leq z \leq #4} B(z)> 0 , B(#3)=#5, 
	B(#4)=#6 \Big\}}

\DeclareDocumentCommand{\bmdbd}{O{s} O{y} O{0} O{t} O{u} O{v} }
{P \Big\{B^\mu(#1) \in \mathrm d #2 \,\Big \vert \min_{#3\leq z \leq #4} B^\mu(z)> 0 , 	
	B^\mu(#3)=#5, 
	B^\mu(#4)=#6 \Big\}}

\newcommand\numberthis{\addtocounter{equation}{1}\tag{\theequation}}
\numberwithin{equation}{section}

%
%
%
%
%
%

\setlength{\parindent}{0pt}
\allowdisplaybreaks

\theoremstyle{remark}
\newtheorem{remark}{Remark}[section]

\theoremstyle{plain}
\newtheorem{theorem}{Theorem}[section]
\newtheorem{lemma}{Lemma}[section]

\theoremstyle{definition}

\theoremstyle{definition}
\newtheorem{definition}{Definition}[section]

\title{On the sojourn time of a Generalized Brownian meander}
\author{F. Iafrate$^1$ and E. Orsingher$^1$}
\address{$^1$Department of Statistical Sciences, Sapienza University of Rome, Italy.}
\email{enzo.orsingher@uniroma1.it, francesco.iafrate@uniroma1.it}
\date{\scriptsize \texttt{\today}}

\begin{document}

\begin{abstract}
In this paper we study the 
sojourn time on the positive half-line up to time $ t $ of a drifted Brownian motion
 with starting point $ u $
and subject to the condition that $ \min_{ 0\leq z \leq l} B(z)> v  $, with $  u > v $.
This  process is a drifted Brownian meander up to time $ l $ and then evolves as a free Brownian motion. 
We also consider the sojourn time of a bridge-type process, where we add the  additional condition to return to the initial level at the end of the time interval. 
We analyze the weak limit of the occupation functional as $ u \downarrow v $. We obtain explicit distributional results when the barrier is placed at the zero level, 
and also in the special case when the drift is null. 

\end{abstract}
\maketitle

\keywords{\small \textbf{Keywords}: Drifted Brownian Meander, Brownian Excursion,  Feynman--Kac functional, Weak Convergence, Tightness, Elastic Brownian motion
}

\section{Introduction}

The Brownian meander is a Brownian motion 
$\{B(t)\,,\,\, t \geq 0\}$
evolving under the condition that $ \min_{0\leq s\leq t} B(s) > 0 $. 
If the additional condition that $ B(t) = c >0  $ is assumed then we have the Brownian excursion which 
is the bridge of the Brownian meander.


Early results in this field emerged in the study of the behaviour of 
random walks conditioned to stay positive where the Brownian meander 
was obtained as the weak limit of such conditional processes (\cite{belkin70}, 
\cite{iglehart74}). In the same spirit the distribution of the maximum 
of the Brownian meander and excursion has been derived in \cite{kaigh78}. 
Further investigations about such distributions have been presented in \cite{chung1976}. 
Some important results can also be found in the classical book by 
\citet{ito1996diffusion}. The notion 
of Brownian meander as a conditional Brownian motion and 
problems concerning weak convergence to such processes have been treated 
in \cite{durrett77}. 
More recently analogous results have been obtained in the general setting
of Lévy processes (see for example \cite{chaumont2005}).

Brownian meanders emerge in path decompositions of the Brownian motion.
In particular Denisov (\cite{denisov84})  shows that a Brownian motion around a maximum 
point can be represented (in law) by means of a two-sided Brownian meander, which is constructed by gluing together 
two meanders. 

These processes also arise in several scientific fields. 
Possible applications range from SPDE's with reflection (\cite{zambotti04}) to enumeration of random graphs (see \cite{janson07} for a survey of the results in this field).

In the recent paper \cite{iafrate19} the authors have  studied the drifted Brownian meander, and provided a construction of such a process 
within the general setting of weak convergence of measures (\cite{billingsley2009convergence}).

We consider below a Brownian motion with drift which alternates intervals 
where it behaves as a meander and as a free Brownian motion.
The aim of  this paper is to study the sojourn distribution on $ (0, \infty) $
of a Brownian motion with drift $ B^\mu(t), t>0 $ which is a drifted meander
up to time $ 0 < l < t $ and in $ [l,t]$ is a drifted Brownian motion with Rayleigh 
distributed starting point. 

Our aim is therefore the analysis of the r.v.
\begin{equation}\label{eq:souj-def-intro}
 \Gamma^\mu_{l,t} =  \int_{l}^{t} \mathbbm 1_{[0, \infty)}( B^\mu(s)) \mathrm d s 
\end{equation}
under the condition that
\begin{equation}
\left\{ 	\min_{0\leq s\leq l} B^\mu(s) > v, B^\mu(0) = u \right\}\,, \qquad l<t
\end{equation}

In order to obtain explicit results for \eqref{eq:souj-def-intro}
we will consider the case where $ u \downarrow v $, $ v = 0 $, $ \mu = 0 $.
In this case we have that
\begin{align}\label{eq:souj-dist-intro}
\lim\limits_{u \to 0 } 
&P\left\{  \Gamma_{l,t} \in \mathrm d s \Big |  \min_{0\leq z\leq l} B(z) > 0, B(0)=u  \right\} = \\
&=
\left\{ 
\begin{aligned}
&\frac{1}{\pi \,\sqrt{s(t-l-s)}  } \frac{s}{s + l} \, \mathrm d s
& 0 \leq s <t-l \\
& \sqrt \frac lt & s = t-l
\end{aligned}
\right. \notag 
\end{align}
Also the case with $ \mu \neq 0 $ is considered but for a non-zero 
drift the sojourn time distribution of $ \Gamma^\mu_{l,t} $ loses the arc-sine form \eqref{eq:souj-dist-intro}. 
As a byproduct of our analysis we give the distribution  
$ P\left\{ \Gamma^\mu_t \in \mathrm d s | B^\mu(0) = x \right\} $
and establish its connection with the transition function of the elastic Brownian motion.
The condition that $ \min_{0\leq s\leq l} B^\mu (s) > 0$ 
implies that there is a positive probability of never crossing the 
zero level during $ (l,t) $. 

In the last part of the paper we consider a Brownian motion which behaves as
a
meander in $ [0,l] $ and at time $ t>l $ is obliged to pass through some point $ y $. 
This is a sort of generalized Brownian excursion. 
For the generalized Brownian excursion we have  that

\begin{align}\label{eq:souj-exc-dist-intro}
&P\left\{  \Gamma^\mu_{l,t} \in \mathrm d s \Big | \inf_{0< z\leq l} B^\mu(z) > 0, B^\mu(0)=0 , B^\mu(t)=0 \right\} 
\\
& \qquad = 
\frac 1t \sqrt \frac{l } {t-l}
\left\{
\frac{t - 2l}{  \sqrt{  l(t - l) }} - 
\frac{t - 2(l + s)}{  \sqrt{  (l+s)(t - l -s) }}  
\right\} \mathrm d s  \, . \notag
\end{align}

Result \eqref{eq:souj-exc-dist-intro} does not depend on the drift 
$  \mu $ because in the Brownian excursion the condition $ B^\mu(t) = 0 $
cancels the drift effect.

A special case of \eqref{eq:souj-exc-dist-intro} is $ l = t/2 $ 
and \eqref{eq:souj-exc-dist-intro} reduces to	
\begin{align*}
& P\left\{  \Gamma^\mu_{\frac t2,t} \in \mathrm d s \Big |  \min_{0\leq z\leq l} B(z) > 0, B(0)=0 , B(t)=0 \right\}
= \\
&= \frac{ 4s}{ t \sqrt{ t^2 - 4s^2}} \mathrm d s \qquad 0<s<\frac  t2 \notag.
\end{align*}


\section{Preliminaries}\label{sec:prelim}
Let $ \{ B(t),  t \in [0,T] \}$ be a Brownian motion adapted to the natural filtration on some measurable space $ (\Omega, \mathscr F) $
and let $ \{ P_u , u \in \mathbb R \} $ be a family of probability measures such that, under each $ P_u $, $ B $ is a Brownian motion and $  P(B(0) = u) = 1$. 
We consider a drifted Brownian motion $ \{ B^\mu (t), 0 \leq t \leq T\}  $ 
defined as $ B^\mu ( t) = B(t) + \mu t \, , 0 \leq t \leq T  $, 
with $ \mu \in \mathbbm{R} $.
The space $ C[0,T] $ of its sample paths, sometimes indicated as $ \omega = \omega(t) $,
is endowed with the Borel $ \sigma $-algebra $ \mathscr C $ generated by the open sets induced 
by the supremum metric. 

For a given probability space $ (\Omega, \mathscr F, P) $
we define the random function 
\begin{equation}\label{eq:rand-fun}
Y: (\Omega, \mathscr F) \mapsto (C[0,T], \mathscr C) \,\, .
 \end{equation}
We take a probability measure $ \nu $ on $ (C[0,T], \mathscr C) $ defined as
\begin{equation}
\nu(A) = P(Y^{-1} ( A)) \qquad A \in \mathscr C \,\, .
\end{equation}
%
%

For a set $ \Lambda \in \mathscr C $ such that $ \nu(\Lambda) > 0 $  we consider the space 
$
( \Lambda,  \mathscr C, \nu(\,\cdot \, | \Lambda ))
$
%
which is the trace of $ (C[0,T], \mathscr C, \nu) $ on the set $ \Lambda $,  where 
the conditional  probability measure $  \nu(\,\cdot \, | \Lambda ) : \Lambda \cap \mathscr C \mapsto [0,1]  $
is defined in the usual sense as
\begin{equation}
\nu(A  | \Lambda )= \frac{\nu(A \cap \Lambda )}{\nu(\Lambda)} \qquad A \in  \mathscr C \,\, .
\end{equation}
We then construct the space 
$
\big ( Y^{-1} (\Lambda),\mathscr F \cap  Y^{-1} (\Lambda), P ( \,\cdot \, | Y^{-1}(\Lambda) ) \big )
$
where
\begin{equation}
	P ( A | Y^{-1}(\Lambda) ) = \frac{P(A \cap Y^{-1} (\Lambda ) )}{P(Y^{-1} (\Lambda ) )} \qquad 
	\text{ for } A 
	\in \mathscr  F \cap  Y^{-1} (\Lambda) \,\, .
\end{equation}
\begin{definition}
	Given a random function $ Y  $ as in \eqref{eq:rand-fun} and a set $ \Lambda \in \mathscr C $ the \emph{conditional process} $ Y| \Lambda $ is defined  
	as the restriction of $ Y $ to the set $ \Lambda $:
	\begin{equation}
	Y|\Lambda : 
	\big (
	\,  Y^{-1} (\Lambda), \mathscr F  \cap  Y^{-1} (\Lambda), P (\, \cdot \,  | {Y^{-1}(\Lambda)} )  \, \big )
	\mapsto
	( \Lambda, \mathscr C, \nu(\, \cdot \,| \Lambda )  ) \,\, 
	\end{equation} 
\end{definition}
%
%

%
%

The following lemma provides the conditions for a conditional process to be Markov (see \cite{durrett77}). 

\begin{lemma}\label{lem:markov-cond}
	Let $ Y $ be a Markov process on $ C[0,T] $ and let $ \Lambda \in \mathscr C $ such that $ \nu(\Lambda) > 0 $. 
	Let $ \pi_{[0,t]} $ and  $ \pi_{[t,T]} $ be the projection maps on $ C[0,T] $ onto  $ C[0,t] $ and 
	 $ C[t,T] $, respectively. If for all $ t \in [0,T] $ there exist sets $ A_t \in \mathscr B(C[0,t] )$ and $ B_t \in \mathscr B(C[t,T] ) $ such that 
	 $ \Lambda   =   \pi_{[0,t]} ^{-1} A_t  \cap \pi_{[t,T]} ^{-1} B_t$ then $ Y|\Lambda  $ is Markov, 
	 where $ \mathscr B $ denotes the Borel sigma-algebra. 
\end{lemma}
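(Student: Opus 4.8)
The plan is to establish the Markov property of $Y|\Lambda$ directly from the definition, using the factorization hypothesis $\Lambda = \pi_{[0,t]}^{-1} A_t \cap \pi_{[t,T]}^{-1} B_t$ to show that the past and the future of the conditioned process decouple given the present, exactly as they do for the unconditioned Markov process $Y$. First I would fix $t \in [0,T]$ and take a bounded measurable functional $g$ depending only on the path after time $t$ (i.e.\ $g = h \circ \pi_{[t,T]}$ for bounded measurable $h$ on $C[t,T]$), together with a bounded $\mathscr F_t$-measurable random variable $F$ representing the past. The goal is to verify that
\begin{equation*}
E\big[ F\, g(Y) \,\big|\, Y^{-1}(\Lambda) \big] = E\big[ F \cdot E[ g(Y) \mid Y(t) ,\, Y^{-1}(\Lambda)] \,\big|\, Y^{-1}(\Lambda) \big],
\end{equation*}
and, more to the point, that the inner conditional expectation is a function of $Y(t)$ alone. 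The key observation is that the event $Y^{-1}(\Lambda)$ splits as $\{ \pi_{[0,t]}\circ Y \in A_t \} \cap \{ \pi_{[t,T]} \circ Y \in B_t \}$, the first factor being $\mathscr F_t$-measurable (a ``past'' event) and the second being measurable with respect to the post-$t$ $\sigma$-algebra (a ``future'' event).

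The main computation is then an application of the ordinary Markov property of $Y$ together with the tower property. Writing $\Lambda_{[0,t]} = \{\pi_{[0,t]}\circ Y \in A_t\}$ and $\Lambda_{[t,T]} = \{\pi_{[t,T]}\circ Y \in B_t\}$, one has $P(A \mid Y^{-1}(\Lambda)) = P(A \cap \Lambda_{[0,t]} \cap \Lambda_{[t,T]})/P(\Lambda_{[0,t]}\cap\Lambda_{[t,T]})$. For a past event $A \in \mathscr F_t$, the numerator is $E\big[ \mathbbm 1_A \mathbbm 1_{\Lambda_{[0,t]}} \, E[\mathbbm 1_{\Lambda_{[t,T]}} \mid \mathscr F_t] \big]$, and by the Markov property of $Y$ the factor $E[\mathbbm 1_{\Lambda_{[t,T]}} \mid \mathscr F_t] = \varphi(Y(t))$ for some measurable $\varphi$. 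An analogous statement holds for a future event. Combining the two, one shows that for $A$ in the past $\sigma$-algebra and $B$ in the future $\sigma$-algebra,
\begin{equation*}
P\big(A \cap B \,\big|\, Y^{-1}(\Lambda)\big) = E\Big[ \mathbbm 1_A \, \frac{\varphi(Y(t))}{E[\varphi(Y(t))\mathbbm 1_{\Lambda_{[0,t]}}]} \cdots \Big| \, Y^{-1}(\Lambda)\Big],
\end{equation*}
which exhibits the conditional independence of the pre-$t$ and post-$t$ $\sigma$-algebras given $Y(t)$ under the measure $P(\,\cdot\mid Y^{-1}(\Lambda))$; this conditional independence is precisely the Markov property of $Y|\Lambda$ in the sense transported to $(\Lambda, \mathscr C, \nu(\cdot\mid\Lambda))$ via the identification of the two probability spaces in the definition of the conditional process.

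The step I expect to be the main obstacle is bookkeeping rather than depth: one must be careful that the event $\Lambda$ is described \emph{simultaneously} by a valid factorization for \emph{every} $t$, so that the splitting used at a fixed $t$ is legitimate, and one must handle the fact that $\varphi(Y(t))$ may vanish on part of the state space (the conditioning is only well-defined on $\{\varphi(Y(t))>0\}$, which has full measure under $\nu(\cdot\mid\Lambda)$). It is also worth noting that the statement is really about the transported measure, so a clean write-up should first reduce, via the canonical identification $Y^{-1}(\Lambda) \leftrightarrow \Lambda$ built into the definition of the conditional process, to checking the Markov property of the coordinate process on $(\Lambda,\mathscr C,\nu(\cdot\mid\Lambda))$; after that reduction the argument is the classical one of Durrett, Iglehart and Miller, and I would simply cite \cite{durrett77} for the remaining routine verification while spelling out the factorization step in full.
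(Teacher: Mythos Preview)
Your outline is correct and is precisely the classical factorization argument: split $\Lambda$ at each $t$ into an $\mathscr F_t$-measurable past event and a post-$t$ future event, absorb the future indicator into a function $\varphi(Y(t))$ via the Markov property of $Y$, and read off the conditional independence of past and future given the present under $P(\,\cdot\mid Y^{-1}(\Lambda))$. The bookkeeping caveats you flag (the null set where $\varphi$ vanishes, and the transport to the trace space) are the right ones and cause no real difficulty.

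As for the comparison: the paper does not prove this lemma at all. It simply states the result and refers the reader to \cite{durrett77}. Your proposal therefore supplies strictly more than the paper does, and what you supply is exactly the argument of Durrett--Iglehart--Miller that the paper is citing. There is no discrepancy in approach; you have just written out what the paper leaves to the reference.
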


In the following $ \nu(\,\cdot \,) $ denotes  the Wiener measure on $ (C[0,T], \mathscr C) $. 
For a Brownian motion starting at $ u $ we usually write $ P(\,\cdot \, | B(0) = u ) $  to denote $ P_u(\,\cdot \,) $
to underline the dependence on the starting point. 
The \emph{ drifted Brownian meander} can be represented as a conditional process $ B^\mu | \Lambda_{u,v} $ where 
 the conditioning event $ \Lambda_{u,v} $ is of the form
\[ \Lambda_{u,v} = \Big\{  \min_{ 0\leq z \leq t} B^{\mu}(z)  > v, B^\mu(0) = u\Big\} \, .\]
Analogously the \emph{Brownian excursion} is a conditional process $ B^\mu | \Lambda_{u,v,c}  $ with
\[ \Lambda_{u,v,c} = \Big\{ \min_{ 0\leq z \leq t} B^{\mu}(z)>v,\, B^\mu(0) = u, B^\mu(t) = c \Big\} 
\qquad u,c > v \,\, .
\]

We remark that the conditional processes introduced above are Markovian in light of \autoref{lem:markov-cond}.

For some fixed $ v > 0 $, we need to study the weak convergence of the measures 
$ \nu_{u,v} := \nu( \, \cdot \, | \Lambda_{u,v}) $ as $ u \downarrow v $. 
See \citet{billingsley2009convergence} for a treatise of the general theory of weak convergence. We here recall the main 
concepts we will make use of.

\begin{definition}
	Given a metric space $ (S, \rho) $ and a family $ \Pi $ of probability measures on $ (S, \mathscr B ( S)) $, 
	 $ \mathscr B ( S) $ being the Borel $ \sigma- $field on $ S $, we say that $ \Pi $ is tight if
	 \[
	 \forall \eta > 0 \quad  \exists \text{ compact } K \subset S  \quad  \text{s.t.} \quad \forall \nu \in \Pi \quad  \nu(K) > 1 - \eta 
	 .
	 \]
\end{definition}

The following theorem holds (see \cite{billingsley2009convergence}, Theorem 7.1, or \cite{karatzas2014brownian}, Theorem 4.15).
\begin{theorem}\label{thm:weak-conv}
	Let $ \{ X^{(n)} \}_n $ and $ X $ be stochastic processes on some probability space $ (\Omega, \mathscr F, P) $ onto $(C([0,T]), \mathscr C )$ and let $ \{\nu_n \}_n $ and $ \nu $, respectively, the induced measures. 
	If for every $ m $ and for every $ 0 \leq t_1 < t_2 < \cdots t_m \leq t $, 
	the finite dimensional distributions of $ (X^{(n)}_{t_1}, \ldots, X^{(n)}_{t_m} ) $ converge 
	to those of $ (X_{t_1}, \ldots, X_{t_m} ) $ and the family $ \{\nu_n\}  $ is tight then $ \nu_n \Rightarrow \nu $.
\end{theorem}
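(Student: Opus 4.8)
The plan is to combine Prohorov's theorem with the elementary fact that the finite-dimensional distributions determine a Borel probability measure on $C[0,T]$, and then conclude by a subsequence argument.

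First I would invoke Prohorov's theorem: since $C[0,T]$ equipped with the supremum metric is a complete and separable metric space and the family $\{\nu_n\}_n$ is tight by hypothesis, $\{\nu_n\}_n$ is relatively compact for the topology of weak convergence. Hence every subsequence $\{\nu_{n_k}\}_k$ admits a further subsequence $\{\nu_{n_{k_j}}\}_j$ converging weakly to some probability measure $Q$ on $(C[0,T],\mathscr C)$.

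Next I would show $Q=\nu$. Fix $m$ and times $0\leq t_1<\cdots<t_m\leq T$, and let $\pi_{t_1,\dots,t_m}\colon C[0,T]\to\mathbb R^m$, $\omega\mapsto(\omega(t_1),\dots,\omega(t_m))$, be the evaluation map, which is continuous in the supremum metric. Since weak convergence is preserved by continuous maps, $\nu_{n_{k_j}}\circ\pi_{t_1,\dots,t_m}^{-1}\Rightarrow Q\circ\pi_{t_1,\dots,t_m}^{-1}$ on $\mathbb R^m$; but the assumed convergence of the finite-dimensional distributions says exactly that $\nu_{n_{k_j}}\circ\pi_{t_1,\dots,t_m}^{-1}\Rightarrow\nu\circ\pi_{t_1,\dots,t_m}^{-1}$. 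By uniqueness of weak limits on $\mathbb R^m$, $Q$ and $\nu$ share the same finite-dimensional distributions. Since the cylinder sets $\pi_{t_1,\dots,t_m}^{-1}(H)$, $H\in\mathscr B(\mathbb R^m)$, form a $\pi$-system generating $\mathscr C$, Dynkin's $\pi$--$\lambda$ theorem gives $Q=\nu$.

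Finally I would run the standard subsequence argument: we have shown that every subsequence of $\{\nu_n\}_n$ has a further subsequence converging weakly to the \emph{same} limit $\nu$, and this forces $\nu_n\Rightarrow\nu$. The only genuinely non-elementary ingredient is Prohorov's theorem, which turns tightness into relative compactness; I would cite it rather than reprove it. The remaining steps — stability of weak convergence under continuous maps, uniqueness of weak limits on $\mathbb R^m$, and the $\pi$--$\lambda$ identification — are routine, so the point worth emphasizing is simply that tightness is precisely the hypothesis that upgrades finite-dimensional convergence, which by itself need not yield weak convergence in $C[0,T]$, to convergence of the path measures.
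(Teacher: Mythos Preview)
Your argument is correct and is precisely the standard proof via Prohorov's theorem, continuity of the projection maps, and the fact that finite-dimensional distributions determine Borel measures on $C[0,T]$. Note, however, that the paper does not supply its own proof of this theorem at all: it is stated as a background result with references to Billingsley (Theorem~7.1) and Karatzas--Shreve (Theorem~4.15), so there is no paper proof to compare against --- your write-up simply reproduces the classical argument found in those sources.
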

%
%
%
%
%
%
%

%
%
%
%

%
%

%
Exploiting these tools one is able to assess the existence of some process whose finite dimensional distributions 
coincide with those of the weak limit $ \nu_v $ of $ \nu_{u,v}$ when $ u \downarrow v $. This measure will coincide with that induced by
$ B^\mu | \Lambda_v $ where 
\[
\Lambda_v = \Big \{\omega : \inf_{ 0 < z < T } \omega(z) > v , \omega(0)  = v \Big \}.
\]

The following result holds concerning the weak convergence to the Brownian meander with 
drift(\cite{iafrate19})

\begin{theorem}\label{thm:weak-conv-mdr}
	The following weak limit holds:
	\begin{equation}\label{eq:weak-conv-mdr}
	B^\mu \Big |\Big \{  \min_{ 0 \leq z \leq t } B^\mu > v, B^\mu(0) = u \Big \} \xRightarrow[u\downarrow v]{}
	B^\mu \Big | \Big \{ \inf_{ 0 < z < t } B^\mu > v, B^\mu(0) = v \Big \}
	\end{equation}
\end{theorem}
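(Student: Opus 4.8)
The plan is to invoke \autoref{thm:weak-conv}: it suffices to prove that the finite--dimensional distributions of $B^\mu\mid\Lambda_{u,v}$ converge to those of $B^\mu\mid\Lambda_v$ and that the family $\{\nu_{u,v}\}_{u>v}$ is tight. First I would reduce to $v=0$, since both assertions are invariant under the spatial shift $\omega\mapsto\omega-v$; from now on write $\Lambda_u:=\Lambda_{u,0}$, $\nu_u:=\nu_{u,0}$, $N(u):=\nu(\Lambda_u)=P_u\{\min_{0\le z\le t}B^\mu(z)>0\}$, with $\nu_u^0$ the corresponding object for $\mu=0$. By \autoref{lem:markov-cond} all the conditional processes involved are Markov, and I let $\nu_0$ be the probability on $C[0,t]$ with the finite--dimensional densities computed below, i.e.\ by definition the law of $B^\mu\mid\Lambda_0$, so that \eqref{eq:weak-conv-mdr} reads $\nu_u\Rightarrow\nu_0$.

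For the finite--dimensional distributions, fix $0<t_1<\dots<t_m<t$, put $t_0=0$, $x_0=u$, and let $q^{\mathrm{kill}}_s(a,b):=\tfrac{1}{\sqrt{2\pi s}}\bigl(e^{-(b-a)^2/2s}-e^{-(b+a)^2/2s}\bigr)=\tfrac{1}{\sqrt{2\pi s}}e^{-(a^2+b^2)/2s}\,2\sinh(ab/s)$ be the transition density of Brownian motion killed at $0$. A Cameron--Martin change of measure turns $P_u\{B^\mu\in\,\cdot\,\}$ into $E_u[e^{\mu(B(t)-u)-\mu^2t/2};B\in\,\cdot\,]$, and conditioning on $\{B(z):z\le t_m\}$ together with the Markov property at $t_m$ gives
\[
P_u\Bigl\{B^\mu(t_i)\in\mathrm dx_i\ (1\le i\le m),\ \min_{[0,t]}B^\mu>0\Bigr\}=e^{-\mu u-\mu^2t/2}\Bigl(\prod_{i=1}^m q^{\mathrm{kill}}_{t_i-t_{i-1}}(x_{i-1},x_i)\Bigr)\phi_\mu(t-t_m,x_m)\prod_{i=1}^m\mathrm dx_i,
\]
where $\phi_\mu(s,x):=\int_0^\infty e^{\mu y}q^{\mathrm{kill}}_s(x,y)\,\mathrm dy$; the case $m=0$ reads $N(u)=e^{-\mu u-\mu^2t/2}\phi_\mu(t,u)$. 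Using $q^{\mathrm{kill}}_s(u,y)=\tfrac{2uy}{\sqrt{2\pi s^3}}e^{-y^2/2s}(1+O(u^2))$ as $u\downarrow0$ and $2\sinh(uy/s)\le 2u\sinh(y/s)$ for $u\in[0,1]$, dominated convergence yields $N(u)=C_\mu u+o(u)$ with $C_\mu:=\tfrac{2}{\sqrt{2\pi t^3}}e^{-\mu^2t/2}\int_0^\infty y\,e^{\mu y-y^2/2t}\,\mathrm dy>0$. Only the factor $q^{\mathrm{kill}}_{t_1}(u,x_1)$ in the joint density is singular in $u$, and $e^{-\mu u}\to1$, so dividing by $N(u)$ the factor $u$ cancels and the finite--dimensional densities of $\nu_u$ converge, as $u\downarrow0$, to
\[
\frac{e^{-\mu^2t/2}}{C_\mu}\,\frac{2x_1}{\sqrt{2\pi t_1^3}}\,e^{-x_1^2/2t_1}\prod_{i=2}^m q^{\mathrm{kill}}_{t_i-t_{i-1}}(x_{i-1},x_i)\,\phi_\mu(t-t_m,x_m)\prod_{i=1}^m\mathrm dx_i,
\]
which are the finite--dimensional densities defining $\nu_0$. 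The cases $t_1=0$ (trivial, $\omega(0)=u\to0$) and $t_m=t$ (the same computation, with $\phi_\mu(t-t_m,x_m)$ replaced by $e^{\mu x_m}$) are handled identically.

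For tightness I would first observe that the Cameron--Martin density $D_u$ of $\nu_u$ with respect to $\nu_u^0$ is a bounded ($u$--dependent) multiple of $e^{\mu\omega(t)}$, and that the estimates above give $\sup_{0<u\le1}E^{\nu_u^0}[D_u^2]=:M<\infty$; hence $\nu_u(K^c)\le M\,\nu_u^0(K^c)^{1/2}$ for every compact $K$, so tightness for general $\mu$ follows from tightness for $\mu=0$, to which I restrict. Tightness at the single time $0$ is immediate ($\omega(0)=u\to0$), so by the standard characterization of tightness on $C[0,t]$ (\cite{billingsley2009convergence}) the task is to show $\lim_{\delta\downarrow0}\limsup_{u\downarrow0}\nu_u^0\bigl(w_\omega(\delta)\ge\varepsilon\bigr)=0$ for each $\varepsilon>0$, where $w_\omega$ is the modulus of continuity on $[0,t]$. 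Using the elementary inequality $w_\omega(\delta)\le\max\bigl(w_{\omega\restriction[\delta,t]}(\delta),\,2\sup_{0\le s\le2\delta}|\omega(s)-\omega(0)|\bigr)$, I would bound the two pieces separately. On $[\delta,t]$, dropping $\{\min_{[\delta,t]}B>0\}$, conditioning at time $\delta$ and using spatial homogeneity of the Brownian modulus,
\[
\nu_u^0\bigl(w_{\omega\restriction[\delta,t]}(\delta)\ge\varepsilon\bigr)\le\frac{P_u\{\min_{[0,\delta]}B>0\}}{N_0(u)}\,\beta_\varepsilon(\delta),\qquad\beta_\varepsilon(\delta):=\sup_x P_x\{w_B(\delta)\ge\varepsilon\}\le C\delta^{-1}e^{-c\varepsilon^2/\delta},
\]
and since $P_u\{\min_{[0,\delta]}B>0\}/N_0(u)\to\sqrt{t/\delta}$ its $\limsup_u$ is $O(\delta^{-3/2}e^{-c\varepsilon^2/\delta})\to0$. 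On $[0,2\delta]$, if $u<\varepsilon/2$ then on $\{\min_{[0,t]}B>0\}\cap\{\sup_{[0,2\delta]}|B-u|\ge\varepsilon/2\}$ one necessarily has $T_{\varepsilon/4}<T_0$ and $T_{\varepsilon/2}\le2\delta$, so the strong Markov property at $T_{\varepsilon/4}$, the reflection principle, and the ruin identity $P_u\{T_{\varepsilon/4}<T_0\}=4u/\varepsilon$ yield
\[
P_u\Bigl\{\min_{[0,t]}B>0,\ \sup_{[0,2\delta]}|B-u|\ge\tfrac{\varepsilon}{2}\Bigr\}\le P_u\{T_{\varepsilon/4}<T_0\}\,P_{\varepsilon/4}\{T_{\varepsilon/2}\le2\delta\}\le\frac{8u}{\varepsilon}\Bigl(1-\Phi\bigl(\tfrac{\varepsilon}{4\sqrt{2\delta}}\bigr)\Bigr);
\]
dividing by $N_0(u)\sim\sqrt{2/(\pi t)}\,u$, its $\limsup_u$ is $O\bigl(1-\Phi(\varepsilon/(4\sqrt{2\delta}))\bigr)\to0$ as $\delta\downarrow0$. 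This gives tightness for $\mu=0$, hence for every $\mu$, and combined with \autoref{thm:weak-conv} it yields \eqref{eq:weak-conv-mdr}.

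I expect the tightness estimate near the origin to be the real obstacle: a crude oscillation bound on $[0,2\delta]$ carries no factor $u$ and hence cannot compensate the vanishing normalizer $N(u)\asymp u$; the missing factor is recovered only by exploiting the positivity constraint through the ruin probability $P_u\{T_r<T_0\}=u/r$, after first crossing an intermediate level so that the residual short--time event becomes exponentially small by reflection. Everything else is either a direct Cameron--Martin--and--reflection computation or a reduction to standard facts.
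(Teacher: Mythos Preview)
The paper does not actually prove \autoref{thm:weak-conv-mdr}; it is stated as a result from \cite{iafrate19} and quoted without argument. Your proposal, by contrast, supplies a complete self-contained proof along the very lines the paper sets up in \cref{sec:prelim}: you invoke \autoref{thm:weak-conv}, check convergence of the finite--dimensional distributions via the Cameron--Martin transform and the killed transition density, and then establish tightness. This is exactly the programme sketched around \autoref{thm:weak-conv} and is presumably the approach of \cite{iafrate19} itself, so there is no methodological divergence to discuss --- only that you have filled in what the present paper merely cites.

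Your argument is correct. A few small remarks. In the Cauchy--Schwarz reduction you should have $\nu_u(K^c)\le M^{1/2}\,\nu_u^0(K^c)^{1/2}$ rather than $M$, but this is immaterial. The uniform bound $\sup_{0<u\le1}E^{\nu_u^0}[e^{2\mu B(t)}]<\infty$ does rely on the convexity inequality $\sinh(uy/t)\le u\sinh(y/t)$ for $u\in[0,1]$ that you already recorded for the fdd step, so it would be worth pointing to it explicitly there. Finally, your diagnosis in the last paragraph is exactly right: the only genuine issue is that the unconditioned short--time oscillation bound lacks a factor $u$, and the gambler's--ruin identity $P_u\{T_r<T_0\}=u/r$ is precisely what restores it. That step is the heart of the tightness proof and you have handled it cleanly.
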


This means that the continuous mapping theorem holds, i.e.
$ \nu_{u,v} \circ g^{-1}$  \raisebox{-3pt} {$\xRightarrow{u\to v }$ }$  \nu_{v} \circ g^{-1} $ for any bounded uniformly continuous function $ g $.

\section{Sojourn time of a generalized  Brownian meander }

In this section we study the random time spent by the drifted Brownian motion on the half-line $[0, \infty)$ up to time $t$ under the condition 
that in $ [0,l] $, $l<t$ the conditions 
\begin{equation}\label{eq:souj-conds}
\left\{ \min_{0\leq z \leq l} B^\mu(z)>0 \right\} \,\, \mathrm{ and }
\,\,
\left\{ B^\mu(0) = u \right\} 
\end{equation}
are fullfilled. 

In other words in $ [0,l] $ the particle behaves like a Brownian meander
and then is free to move on the whole line.

We therefore study the distribution of
\begin{equation}\label{eq:souj-def}
\Gamma^\mu_{(0,t)}  = l + \Gamma^\mu_{l,t} = l + \int_{l}^{t} \mathbbm 1_{[0, \infty)}( B^\mu(s)) \mathrm d s 
\end{equation}
The conditions \eqref{eq:souj-conds}
exert their effect on the distribution of the sojourn time $ \Gamma_{l,t}$
because the position at time $ l $ of the Brownian particle is random.

It is well-known that for the non-drifted Brownian bridge the following 
result hold
\begin{equation}\label{eq:souj-old}
P\left\{  \Gamma_t \in \mathrm d s \Big | B(0)  =B(t) = 0    \right\} = \frac{ \mathrm d s }{t}  
\qquad 0 < s < t \,\,.
\end{equation}

If the starting point is $\left\{ B(0) = u \right\}$ than 
the distribution of the sojourn time becomes
\begin{align}\label{eq:souj-new}
&P\left\{  \Gamma_t \in \mathrm d s \Big | B(0)=u, B(t) = 0    \right\} =
\\
&=\sqrt \frac{t}{2 \pi} e^{  \frac{ u^2}{2t}} \int_{0}^{s} \frac{ u e ^ {  - \frac{  u^2}{2w}}}{ \sqrt{ w^3 (t-w)^3}} \,\mathrm d w
\qquad 0\leq s \leq t \,\,,\,\,\, u > 0. \notag 
\end{align}

For $ u \downarrow 0 $ the first-passage time distribution of a Brownian
motion 
\[
f(u,w) = \frac{u e^{- \frac{u^2}{2w}}}{\sqrt{2 \pi w^3}}
\rightarrow \delta(w)
\]
and from \eqref{eq:souj-new} result \eqref{eq:souj-old} is 
retrieved as a special case. 

For the sojourn time of the drifted Brownian motion it holds that (see (3.1) of \cite{BEGHIN2003291})
\begin{align}\label{eq:souj-joint}
	P\left\{ \Gamma^\mu \in \mathrm d s,  B^\mu(t) \in \mathrm d x \right\} = 
	\left\{ 
	\begin{aligned}
	&
	\frac{\mathrm d s \mathrm d x}{2 \pi } e^{ - \frac{\mu^2 t}{2} + \mu x}
	\int_{t-s}^t \frac{x}{\sqrt{z^3 (t-z)^3}} e^{ - \frac{x^2}{2(t-z)}} \, 
	\mathrm d z & x > 0 
	\\
	&
	\frac{\mathrm d s \mathrm d x}{2 \pi } e^{ - \frac{\mu^2 t}{2} + \mu x}
	\int_{s}^t \frac{-x}{\sqrt{z^3 (t-z)^3}} e^{ - \frac{x^2}{2(t-z)}} \, 
	\mathrm d z & x < 0
	\end{aligned}
	\right.
\end{align}

In the book by Borodin and Salminen (1996) there is an 
alternative form to \eqref{eq:souj-joint} (formula 2.1.4.8, page 203)
which reads
\begin{align}\label{eq:souj-joint-alt}
P\left\{ \Gamma^\mu \in \mathrm d s,  B^\mu(t) \in \mathrm d x \right\} = 
\left\{ 
\begin{aligned}
&
\frac{\mathrm d s \mathrm d x}{2 \pi } e^{ - \frac{\mu^2 t}{2} + \mu x}
\int_{0}^\infty \frac{z(z + x)}{\sqrt{s^3 (t-s)^3}} 
e^{ - \frac{(z +x)^2}{2s} -   \frac{z^2}{2(t-s)} } \, 
\mathrm d z & x > 0 
\\
&
\frac{\mathrm d s \mathrm d x}{2 \pi } e^{ - \frac{\mu^2 t}{2} + \mu x}
\int_{0}^\infty \frac{z(z - x)}{\sqrt{s^3 (t-s)^3}} 
e^{  \frac{z^2}{2s } - \frac{(z -x)^2}{2(t-s)}   } \,
\mathrm d z
& x<0
 \end{aligned}
\right.
\end{align}

Result \eqref{eq:souj-joint} has been obtained in Beghin, Nikitin, Orsingher  (2003) by applying the conditional Feynman--Kac functional. 
It can also be obtained by applying the Girsanov theorem to the non-drifted
joint distribution of $ (\Gamma_t, B(t)) $.

We here consider the  family of measures $\{ \nu_{u}^l , u>0  \}$ defined as
\begin{equation}\label{eq:souj-cond-meas-def}
\nu_{u}^l(A) = P\Big\{ B^\mu \in A \Big | \Lambda_{u}^l \Big \} \qquad A \in \mathscr C, \quad u > 0
\end{equation}
where
\[
\Lambda_{u}^l = \{  \omega \in C[0,t]:  \min_{ 0\leq z \leq l} \omega(z) > v, \omega(0) = u \}
\]
The tightness of the family of measures \eqref{eq:souj-cond-meas-def}
can be proved following the same procedure as in \cite{iafrate19}.
In this way we can state the weak convergence of the measures
\[
\nu_{u}^l \Rightarrow_{u \downarrow v} \nu_{v}^l
\]
holds 
where 
\[
\nu_{v}^l(A) = P\Big(B^\mu \in A \Big | \inf_{ 0<  z \leq l} B^\mu(z) > v, B^\mu(0) = v \Big ) \qquad A \in \mathscr C
\]

For our purposes we assume that $ v = 0 $.

Our main concern here is  the study of the following  
sequence of probability measures

\begin{equation}\label{eq:souj-dist-prethm}
	P\left\{  \Gamma^\mu_{l,t} \in \mathrm d s \Big |  \min_{0\leq z\leq l} B^\mu(z) > 0, B^\mu(0)=u  \right\}
	\end{equation}

\begin{equation}\label{eq:souj-exc-dist-pre}
P\left\{  \Gamma^\mu_{l,t} \in \mathrm d s \Big |  \min_{0\leq z\leq l} B^\mu(z) > 0, B^\mu(0)=u, B^\mu(t) = 0	 \right\}
\end{equation}

$ \quad 0 < s < t-l, u>0,  l < t $. 
In the next theorem we give the distribution $ \eqref{eq:souj-dist-prethm} $, 
for $ u > 0 $. The distribution \eqref{eq:souj-exc-dist-pre} is treated in Section 4.

\begin{theorem}
	\begin{align}\label{eq:souj-thm}
	&P\left\{  \Gamma^\mu_{l,t} \in \mathrm d s \Big |  \min_{0\leq z\leq l} B^\mu(z) > 0, B^\mu(0)=u  \right\}	\\
	&=
	\frac{
		\int_0^\infty 
		P\left\{ \min_{0\leq z\leq l} B^\mu(z) > 0, 
		B^\mu(l) \in \mathrm d y \Big | B^\mu(0) = u \right\}
		P\left\{ \Gamma^\mu_{l,t} \in \mathrm d s \Big | 
		 B^\mu(l) = y \right\}
	}{
		P\left\{ \min_{0\leq z\leq l} B^\mu(z) > 0, 
		 \Big | B^\mu(0) = u \right\}
	}\notag 
	\end{align}
	
	where 
	\begin{align*}
	&\frac{ 
		P\left\{ \min_{0\leq z\leq l} B^\mu(z) > 0, 
		B^\mu(l) \in \mathrm d y \Big | B^\mu(0) = u \right\}
	}{
	P\left\{ \min_{0\leq z\leq l} B^\mu(z) > 0, 
	\Big | B^\mu(0) = u \right\}
	}
	\numberthis 
	\\
	&=
	\frac{
		\frac{1}{\sqrt{2 \pi l}}\left(
		e^{-\frac{(y-u)^2}{2l}} - e^{-\frac{(y+u)^2}{2l}} \right)
		e^{ - \frac{\mu^2 l}{2} + \mu (y-u) } \, \mathrm d y
	}{
		\int_0^\infty 
		\frac{1}{\sqrt{2 \pi l}}\left(
		e^{-\frac{(y-u)^2}{2l}} - e^{-\frac{(y+u)^2}{2l}} \right)
		e^{ - \frac{\mu^2 l}{2} + \mu (y-u) }
		\,\mathrm dy
	}
	\end{align*}
	
	and 
	
	\begin{align*}\label{eq:dist-souj-drift-ac-thm}
	&
	\numberthis
	P\left\{ \Gamma_{l,t}^\mu \in \mathrm d s \Big | B^\mu(l) = y \right\}
	\\
	&=
	e^{- \frac{\mu^2 (t-l)}{2} + \mu y}\cdot
	2 e^{\mu y}
	\int_y^\infty e^{-\mu w} \mathrm d w
	\int_0^\infty e^{\mu w'} 
	\frac{w e^{- \frac{w^2}{2s}}}{\sqrt{2 \pi s^3}} 
	\frac{w' e^{- \frac{w'^2}{2(t-l -s)}}}{\sqrt{2 \pi (t-l -s)^3}} \,
	\mathrm d w'
	\mathrm d s .
	\end{align*}
\end{theorem}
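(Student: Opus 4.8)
I would split the problem at the deterministic time $l$ and treat the pieces on $[0,l]$ and on $[l,t]$ separately. For the first identity, note that the conditioning event $\{\min_{0\le z\le l}B^\mu(z)>0\}\cap\{B^\mu(0)=u\}$ is measurable with respect to the $\sigma$-field generated by $\{B^\mu(z):0\le z\le l\}$, while $\Gamma^\mu_{l,t}=\int_l^t\mathbbm 1_{[0,\infty)}(B^\mu(s))\,\mathrm ds$ is measurable with respect to the $\sigma$-field generated by $\{B^\mu(l+w):0\le w\le t-l\}$. Conditioning additionally on $B^\mu(l)=y$ and using the Markov property of $B^\mu$ (given $B^\mu(l)$ the future is independent of the past, so the meander condition and the initial datum drop out of the last factor), one gets
\[
P\{\Gamma^\mu_{l,t}\in\mathrm ds,\ \Lambda_u^l\}=\int_0^\infty P\{\min_{0\le z\le l}B^\mu(z)>0,\ B^\mu(l)\in\mathrm dy\mid B^\mu(0)=u\}\;P\{\Gamma^\mu_{l,t}\in\mathrm ds\mid B^\mu(l)=y\},
\]
the integral running over $y>0$ because on $\Lambda_u^l$ one has $B^\mu(l)>0$ a.s.; dividing by $P\{\min_{0\le z\le l}B^\mu(z)>0\mid B^\mu(0)=u\}$ yields \eqref{eq:souj-thm}.

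Next I would identify the normalised law of $B^\mu(l)$. For the driftless Brownian motion the reflection principle gives, for $y>0$,
\[
P\{\min_{0\le z\le l}B(z)>0,\ B(l)\in\mathrm dy\mid B(0)=u\}=\frac{1}{\sqrt{2\pi l}}\Bigl(e^{-\frac{(y-u)^2}{2l}}-e^{-\frac{(y+u)^2}{2l}}\Bigr)\mathrm dy .
\]
To bring in the drift I apply the Cameron--Martin--Girsanov theorem on $[0,l]$: the law of $B^\mu$ restricted to the $\sigma$-field of the path on $[0,l]$ is absolutely continuous with respect to that of $B$ with density $\exp\!\bigl(\mu(B(l)-u)-\tfrac{\mu^2 l}{2}\bigr)$, and since both $\{\min_{0\le z\le l}B>0\}$ and $\{B(l)\in\mathrm dy\}$ belong to that $\sigma$-field, the drifted joint density is the driftless one multiplied by $e^{\mu(y-u)-\mu^2 l/2}$. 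Normalising over $y\in(0,\infty)$ reproduces exactly the quotient displayed after \eqref{eq:souj-thm}.

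The substantive part is the third factor $P\{\Gamma^\mu_{l,t}\in\mathrm ds\mid B^\mu(l)=y\}$. By time homogeneity and stationarity of the increments, conditionally on $B^\mu(l)=y$ the shifted process $\{B^\mu(l+w)\}_{w\ge0}$ is a drifted Brownian motion started at $y$, so this equals the law of the occupation time of $[0,\infty)$ over a time interval of length $\tau:=t-l$ of a drifted Brownian motion issued from $y>0$, for $0<s<\tau$ (the value $s=\tau$, i.e.\ never reaching the origin, carries an atom that lies outside the absolutely continuous part recorded in \eqref{eq:dist-souj-drift-ac-thm}). I would derive it from the known joint law \eqref{eq:souj-joint}--\eqref{eq:souj-joint-alt} of occupation time and terminal position of a drifted Brownian motion \emph{started at the origin}, via either of two routes: (i) re-run the conditional Feynman--Kac computation of \cite{BEGHIN2003291} with initial point $y$ in place of $0$ to get $P\{\Gamma^\mu_{l,t}\in\mathrm ds,\ B^\mu(t)\in\mathrm dx\mid B^\mu(l)=y\}$, then integrate over $x\in\mathbb R$; or (ii) decompose at the first passage time $T$ of the origin, whose density is $\tfrac{y}{\sqrt{2\pi r^3}}e^{-(y+\mu r)^2/(2r)}=\tfrac{y\,e^{-y^2/(2r)}}{\sqrt{2\pi r^3}}\,e^{-\mu y-\mu^2 r/2}$: on $\{T=r\le s\}$ the path lies in $[0,\infty)$ throughout $[0,r]$ and then restarts at $0$, so by the Markov property the joint law of $(\Gamma^\mu_{l,t},B^\mu(t))$ on $\{T\le\tau\}$ is the corresponding convolution of the first-passage density with the origin-started joint law of \eqref{eq:souj-joint-alt} over an interval of length $\tau-r$, and integrating over $x\in\mathbb R$ and rearranging the resulting double integral recovers \eqref{eq:dist-souj-drift-ac-thm}.

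The hard part will be this final rearrangement. One must track the Cameron--Martin exponentials through the decomposition: they telescope because the lengths of the two resulting pieces add up to $\tau$, which is what leaves the clean prefactor $e^{-\mu^2\tau/2}$ while splitting off the weights $e^{-\mu w}$ and $e^{\mu w'}$ attached to the two first-passage kernels over the subintervals of lengths $s$ and $\tau-s$; then one has to carry out the $x$-integration and a change of variables so that the expression collapses into the stated product form, checking en route that the non-hitting event $\{T>\tau\}$ contributes only the atom at $s=\tau$ and so does not affect the absolutely continuous density on $(0,\tau)$. By comparison the first two steps are routine, resting only on the Markov property and on a reflection-plus-Girsanov identity.
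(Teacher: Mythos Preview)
Your proposal is correct, and your treatment of the first two ingredients (the Markov decomposition at time $l$ and the reflection-plus-Girsanov identification of the meander marginal) matches the paper, which in fact states those pieces in the theorem without further argument and devotes its proof entirely to the third ingredient, the law of $\Gamma^\mu_{l,t}$ given $B^\mu(l)=y$.

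For that third ingredient the paper takes a direct analytic route rather than either of yours. It sets up the Feynman--Kac functional $w(x,t)=\mathbb E\bigl[e^{-\beta\Gamma_t^\mu}\mid B^\mu(0)=x\bigr]$ for the \emph{marginal} law (not the joint with the terminal position), removes the drift by the substitution $w(x,t)=e^{-\mu^2 t/2+\mu x}z(x,t)$, takes the Laplace transform in $t$, solves the resulting second-order ODE in $x$ on each half-line and matches $Z$ and $Z'$ at $x=0$, and then inverts the $\gamma$- and $\beta$-transforms to read off the density \eqref{eq:dist-souj-drift-ac-thm} together with its atom at $s=\tau$. This is close in spirit to your route (i), but the paper never passes through the joint law and never integrates out the endpoint; the general starting point enters from the outset through the initial condition $z(x,0)=e^{-\mu x}$. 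Your route (ii), the first-passage decomposition reducing to the origin-started case, is a genuinely different and probabilistically more transparent argument; its cost is exactly the convolution-and-rearrangement step you flag as the hard part, which the paper's PDE/Laplace method sidesteps at the price of a more mechanical computation.
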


\begin{proof}

The evaluation of the probability $ P\left\{ \Gamma^\mu_{l,t} \in \mathrm d s \Big | 
B^\mu(l) = x \right\} $ can be performed applying  the
Feynman--Kac functional. In order to simplify the notation we consider 
$ \Gamma_{0,t}^\mu = \Gamma_t^\mu $ and the functional 
\begin{equation}\label{eq:fk-functional}
	w(x,t) = \mathbb E \left\{
	e^{ - \int_0^t k(B^\mu (s)) \mathrm d s  } \Big | 
	B^\mu(0) = x
	\right\}
\end{equation}
which solves the Cauchy problem 
\begin{equation}\label{eq:fk-equation}
	\left\{ 
	\begin{aligned}
	&\frac{\partial w}{\partial t}  = 
	\frac 12 \frac{\partial ^2 w}{\partial x^2} -
	\mu \frac{\partial w}{\partial x} - k(x) w \\
	& w(x,0) = 1
	\end{aligned}
	\right.
\end{equation}
for 
\[
k(x) = \begin{cases}
\beta & x > 0 \\
0 & x<0
\end{cases}
\]

The functional \eqref{eq:fk-functional} coincides with the Laplace
transform of the sojourn time

\begin{equation}
 \Gamma^\mu_{t} =  \int_{0}^{t} \mathbbm 1_{[0, \infty)}( B^\mu(s)) \mathrm d s 
\end{equation}

By means of the transformation
\begin{equation}
	w(x,t) = e^{ - \frac{\mu^2t}{2} + \mu x} z(x,t)
\end{equation}
the problem \eqref{eq:fk-equation} is converted into 

\begin{equation}\label{eq:fk-equation-tr}
\left\{ 
\begin{aligned}
&\frac{\partial z}{\partial t}  = 
\frac 12 \frac{\partial ^2 z}{\partial x^2} -
\beta \mathds 1_{[0, \infty)}(x) z \\
& z(x,0) = e^{-\mu x} \,\,
\end{aligned}
\right.
\end{equation}

For the Laplace transform $ Z(x, \gamma) = \int_0^\infty e^{ - \gamma t} z(x, t) \,\mathrm d t $ we have

\begin{equation}
	\gamma Z - z(x, 0) = 
	\frac 12 \frac{\mathrm d^2 Z}{\mathrm d x^2} 
	- \beta \mathds 1_{[0, \infty)}(x) Z \,\, .
\end{equation}

By taking into account that we need a bounded solution of
\eqref{eq:fk-equation} and that $ z(x,0) = e^{-\mu x} $
we get
\begin{equation}\label{eq:fk-lap-without-const}
	Z(x, \gamma) = 
	\left\{
	\begin{aligned}
	&
	B e^{ - x \sqrt{2(\gamma + \beta)}  } + 
	\frac{e^{- \mu x}}{\beta + \gamma - \frac{\mu^2}{2}}
	& x > 0
	\\
	&
	C e^{ x\sqrt{2 \gamma}}  + 
	\frac{e^{\mu x}}{\gamma - \frac{\mu^2}{2}}
	&
	x < 0\,\, 
	\end{aligned}
	\right.
\end{equation}

The continuity of $ Z $ and $ \frac{\mathrm d Z}{\mathrm d x } $ at $ x = 0 $
imply that 
\begin{equation}
B = \frac{2}{\sqrt{2(\beta + \gamma)} + \mu}
\left( \frac{1}{\sqrt{2 \gamma} - \mu} - \frac{1}{\sqrt{2 (\beta +\gamma)} - \mu} \right)
\end{equation}

From \eqref{eq:fk-lap-without-const}, for $ x > 0 $, we have therefore
\begin{align}\label{eq:fk-a}
	Z(x, \gamma)   &= 
	\frac{2e^{ - x \sqrt{2(\gamma + \beta)}  }}{\sqrt{2(\beta + \gamma)} + \mu}
	\left( \frac{1}{\sqrt{2 \gamma} - \mu} - \frac{1}{\sqrt{2 (\beta +\gamma)} - \mu} \right) 
	 + 
	\frac{e^{- \mu x}}{\beta + \gamma - \frac{\mu^2}{2}}
	\\
	&=
	2 e^{\mu x}\int_x^\infty e^{-w ( \sqrt{2(\gamma + \beta)} + \mu)}
	\left( \frac{1}{\sqrt{2 \gamma} - \mu} - \frac{1}{\sqrt{2 (\beta +\gamma)} - \mu} \right) \mathrm d w + 
	\frac{e^{- \mu x}}{\beta + \gamma - \frac{\mu^2}{2}}
	\notag 
\end{align}
In order to develop \eqref{eq:fk-a} we need the following expressions

\begin{gather*}
	e^{-w ( \sqrt{2(\gamma + \beta)} + \mu)}
	= 
	e^{ - \mu w} \int_0^\infty e^{- (\gamma + \beta ) t}
	\frac{w e^{- \frac{w^2}{2t}}}{\sqrt{2 \pi t^3}} \,\mathrm d t
	\\
	\frac{1}{\sqrt{2 \gamma } - \mu} = 
	\int_0^\infty e^{-w(\sqrt{2\gamma} -\mu )} \mathrm d w = 
	\int_0^\infty e^{\mu w} \mathrm dw  \int_0^\infty e^{- \gamma t}
	\frac{w e^{- \frac{w^2}{2t}}}{\sqrt{2 \pi t^3}} \,\mathrm d t
	\\
	\frac{1}{\sqrt{2 (\gamma + \beta) } - \mu} = 
	\int_0^\infty e^{-w(\sqrt{2\gamma} -\mu )} \mathrm d w = 
	\int_0^\infty e^{\mu w} \mathrm d w \int_0^\infty e^{- (\gamma + \beta) t}
	\frac{w e^{- \frac{w^2}{2t}}}{\sqrt{2 \pi t^3}} \,\mathrm d t
\end{gather*}

By composing all these terms we obtain $ Z(\gamma, x) $
explicitly as
\begin{align*}\label{eq:fk-b}
	Z(x, \gamma) &= 2 e^{\mu x}\Bigg[
	\int_x^\infty e^{-\mu w} \mathrm d w
	 \int_0^\infty e^{- (\gamma + \beta) t}
	\frac{w e^{- \frac{w^2}{2t}}}{\sqrt{2 \pi t^3}} \,\mathrm d t
	\int_0^\infty e^{\mu w'}  \mathrm d w'
	 \int_0^\infty e^{- \gamma t'}
	\frac{w' e^{- \frac{w'^2}{2t'}}}{\sqrt{2 \pi t'^3}} \,\mathrm d t'
	\\
	& \qquad -
	\int_x^\infty e^{-\mu w} \mathrm d w
	 \int_0^\infty e^{- (\gamma + \beta) t}
	\frac{w e^{- \frac{w^2}{2t}}}{\sqrt{2 \pi t^3}} \,\mathrm d t
	\int_0^\infty e^{\mu w'}  \mathrm d w'
	\int_0^\infty e^{- (\gamma +\beta) t'}
	\frac{w' e^{- \frac{w'^2}{2t'}}}{\sqrt{2 \pi t'^3}} \,\mathrm d t'
	\Bigg]\\
	& \qquad
	 + e^{-\mu x} \int_0^\infty e^{-w(\beta + \gamma - \frac{\mu^2}{2})}
	\,\mathrm d w .
	\numberthis
\end{align*}

The inverse Laplace transform of \eqref{eq:fk-b} after
a quick check becomes

\begin{align*}\label{eq:fk-c}
\numberthis 
	z(x, t) &= 2 e^{\mu x}
		\int_x^\infty e^{-\mu w} \mathrm d w
		\int_0^\infty e^{\mu w'}  \mathrm d w'
		 \int_0^t
	\frac{w e^{- \frac{w^2}{2s}}}{\sqrt{2 \pi s^3}} 
	\frac{w' e^{- \frac{w'^2}{2(t-s)}}}{\sqrt{2 \pi (t-s)^3}} \,
	(e^{-\beta s} - e^{-\beta t})
	\mathrm d s
	\\
	& \qquad +
	e^{-\mu x} \int_0^\infty e^{- w \beta} e^{\mu^2 \frac w2}
	\delta(t-w)
	\,\mathrm d w
	\\
	&=
	2 e^{\mu x}
		\int_x^\infty e^{-\mu w} \mathrm d w
	\int_0^\infty e^{\mu w'}  \mathrm d w'
	\int_0^t \beta e^{-\beta z} \,\mathrm d z
	\int_0^z
	\frac{w e^{- \frac{w^2}{2s}}}{\sqrt{2 \pi s^3}} 
	\frac{w' e^{- \frac{w'^2}{2(t-s)}}}{\sqrt{2 \pi (t-s)^3}} \,
	\mathrm d s
	\\
	& \qquad +
	e^{-\mu x} e^{- t \beta } e^{\frac{\mu^2t}{2}}
	\qquad \text{(by integrating by parts with respect to } z)
	\\
	&=
	2 e^{\mu x}
	\int_x^\infty e^{-\mu w} \mathrm d w
	\int_0^\infty e^{\mu w'}  \mathrm d w'
	\int_0^t  e^{-\beta z} 
	\frac{w e^{- \frac{w^2}{2z}}}{\sqrt{2 \pi z^3}} 
	\frac{w' e^{- \frac{w'^2}{2(t-z)}}}{\sqrt{2 \pi (t-z)^3}} 
	\,\mathrm d z
	\\
	& \qquad -
	2 e^{\mu x} e^{- \beta t}
	\int_x^\infty e^{-\mu w} \mathrm d w
	\int_0^\infty e^{\mu w'}  \mathrm d w'
	\int_0^t 
	\frac{w e^{- \frac{w^2}{2s}}}{\sqrt{2 \pi s^3}} 
	\frac{w' e^{- \frac{w'^2}{2(t-s)}}}{\sqrt{2 \pi (t-s)^3}} \,
	\mathrm d s
	\\
	& \qquad +
	e^{-\mu x} e^{- t \beta } e^{\frac{\mu^2t}{2}}.
\end{align*}
In order to obtain the absolutely continuous part of the
distribution of $ \Gamma_t^\mu  $ we need to multiply 
the inverse Laplace transform of \eqref{eq:fk-c} 
(with respect to $ \beta $)
by the Girsanov term $ e^{- \frac{\mu^2 t}{2} + \mu x} $. 

In conclusion for $ 0 \leq s < t  $ we have that
\begin{align*}\label{eq:dist-souj-drift-ac}
	&
	\numberthis
	P\left\{ \Gamma_t^\mu \in \mathrm d s \Big | B^\mu(0) = x \right\}
	\\
	&=
	e^{- \frac{\mu^2 t}{2} + \mu x}\cdot
	2 e^{\mu x}
	\int_x^\infty e^{-\mu w} \mathrm d w
	\int_0^\infty e^{\mu w'} 
	\frac{w e^{- \frac{w^2}{2s}}}{\sqrt{2 \pi s^3}} 
	\frac{w' e^{- \frac{w'^2}{2(t-s)}}}{\sqrt{2 \pi (t-s)^3}} \,
	 \mathrm d w'
	\mathrm d s .
\end{align*}

The singular part of the distribution is instead
\begin{align*}
	P\left\{ \Gamma_t^\mu = t \Big | B^\mu(0) = x \right\} = 
	1 - 2e^{- \frac{\mu^2t}{2} + \mu x} 
	\int_x^\infty e^{- \mu w}\,\mathrm d w
	\int_0^\infty e^{\mu w' } \, \mathrm d w'
	\int_0^t
		\frac{w e^{- \frac{w^2}{2s}}}{\sqrt{2 \pi s^3}} 
	\frac{w' e^{- \frac{w'^2}{2(t-s)}}}{\sqrt{2 \pi (t-s)^3}} \,
	\mathrm d s
\end{align*}
For $ \mu = 0 $ we get
\begin{equation}\label{eq:souj-dist-start-ac}
	P\left\{ \Gamma_t \in \mathrm d s \Big | B(0) = x \right\} = 
	\frac{e^{- \frac{x^2}{2s}}}{\pi \sqrt{s(t-s)} } \mathrm d s
	\qquad 0 \leq s <t
\end{equation}
\begin{equation}\label{eq:souj-dist-start-sing}
P\left\{ \Gamma_t = t \Big | B(0) = x \right\} = 
1 -  \int_0^t 
\frac{e^{- \frac{x^2}{2s}}}{\pi \sqrt{s(t-s)} }\mathrm d s
= \frac{2}{\sqrt{2 \pi t}} \int_0^x e^{ - \frac{w^2}{2t}} \,\mathrm d w .
\end{equation}

\end{proof}

\begin{remark}
	The distribution \eqref{eq:dist-souj-drift-ac} can be written as 
	\begin{equation}
	P\left\{ \Gamma_t^\mu \in \mathrm d s \Big | B^\mu(0) = x \right\}
	= 
	\frac 12 e^{- \frac{\mu^2t}{2} + \mu x}
	P_0\left\{ B_1^{el} (s) \in \mathrm d x \right\}
	P_0\left\{ B_1^{el} (t-s) \in \mathrm d 0 \right\}
	\end{equation}
	where $  B_1^{el}  $ and $  B_2^{el}  $
	are independent elastic Brownian motions. 
	The elastic Brownian motion $ B_1^{el} $
	runs on the positive half-line and is defined as 
	\[ 
	B_1^{el} = 
	\begin{cases}
	B^+(t) \qquad & t < T \\
	0 & t\geq T
	\end{cases} \]
	where $ B^+ $ is a reflecting Brownian motion
	and $ T $ is a random time such that
	\begin{equation}\label{eq:elastic-surv-dist}
	P(T>t | \mathscr F_t) = e^{- \mu L(0,t)} \qquad \mu > 0
	\end{equation}
	and $ L(0,t) $ is the local time at 0. The killing rate appearing in
	\eqref{eq:elastic-surv-dist} coincides with the drift $ \mu $. 
	Since
	\begin{align*}
		2 \int_0^\infty e^{\mu w} 
		\frac{w e^{- \frac{w^2}{2(t-s)}}}{\sqrt{2 \pi (t-s)^3}}
		\mathrm d w = 2\int_{-\infty}^0e^{ - \mu w} 
		\frac{|w| e^{- \frac{w^2}{2(t-s)}}}{\sqrt{2 \pi (t-s)^3}}
		\mathrm d w
	\end{align*}
	the second term inside \eqref{eq:dist-souj-drift-ac} can be interpreted 
	as the probability that an elastic Brownian motion $ B^{el}_2 $
	running on the negative half-line, starting at zero and visiting
	the barrier at time $ t-s $. 
	For $ \mu < 0 $ the elastic Brownian motions must be interchanged.
	
	For $ B^{el}(t), t> 0 $ starting from an arbitrary 
	$ y>0 $ the transition density develops as
	\begin{align*}
	p^{el}(x, t;y,0) 
	&= 
	\frac{
	e^{- \frac{(x-y)^2}{2t} } - 
	e^{- \frac{(x+y)^2}{2t} }
	}{\sqrt{2 \pi t}} + 
	2 e^{\mu (x+y)} \int_{x+y}^\infty e^{-\mu w} 
	\frac{w e^{- \frac{w^2}{2t}}}{\sqrt{2 \pi t^3}} 
	\,\mathrm d w
	\\
	&=
	\frac{
		e^{- \frac{(x-y)^2}{2t} } + 
		e^{- \frac{(x+y)^2}{2t} }
	}{\sqrt{2 \pi t}} -
	2\mu  e^{\mu (x+y)} \int_{x+y}^\infty e^{-\mu w} 
	\frac{ e^{- \frac{w^2}{2t}}}{\sqrt{2 \pi t}} 
	\,\mathrm d w
	\end{align*}
	and thus for $ y = 0 $ we obtain the distribution of 
	$ B_1^{el}(t) $.

	We extract from Theorem 3.1 the distribution of 
	$ \Gamma_{l,t} $ for $ \mu = 0 $ and $ u = 0 $ in the next theorem. 
\end{remark}

\begin{theorem}
	
	\begin{align}\label{eq:souj-mdr-thm-ac}
		P\left\{ \Gamma_{l,t} \in \mathrm d s |
		\inf_{ 0< z \leq l} B (z) > 0,  B(0) = 0  \right\}
		= 
		\frac{\mathrm d s}{\pi \sqrt{s(t-l-s)}} \frac{s}{s+l}
		\qquad 0 < s < t-l
	\end{align}
	
	and 
	\begin{align}\label{eq:souj-mdr-thm-sing}
	P\left\{ \Gamma_{l,t} =  t-l |
	\inf_{ 0\leq z \leq l} B (z) > 0,  B(0) = 0  \right\}
	= 
	\sqrt \frac lt. 
	\end{align}
	
\end{theorem}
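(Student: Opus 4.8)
The plan is to pass to the limit $u\downarrow 0$ inside formula~\eqref{eq:souj-thm}, specialised to the driftless case $\mu=0$. First I would record the two ingredients appearing in~\eqref{eq:souj-thm} when $\mu=0$: by the reflection principle the (unnormalised) meander endpoint kernel on $[0,l]$ is $\frac1{\sqrt{2\pi l}}\big(e^{-\frac{(y-u)^2}{2l}}-e^{-\frac{(y+u)^2}{2l}}\big)\,\mathrm{d}y$, with normalising constant $P\big\{\min_{0\le z\le l}B(z)>0\mid B(0)=u\big\}=\frac2{\sqrt{2\pi l}}\int_0^u e^{-\frac{w^2}{2l}}\,\mathrm{d}w$; and by time homogeneity combined with~\eqref{eq:souj-dist-start-ac}--\eqref{eq:souj-dist-start-sing} (taking $x=y$ and replacing $t$ by $t-l$) one has $P\{\Gamma_{l,t}\in\mathrm{d}s\mid B(l)=y\}=\frac{e^{-y^2/2s}}{\pi\sqrt{s(t-l-s)}}\,\mathrm{d}s$ on $0\le s<t-l$ and $P\{\Gamma_{l,t}=t-l\mid B(l)=y\}=\frac2{\sqrt{2\pi(t-l)}}\int_0^y e^{-\frac{w^2}{2(t-l)}}\,\mathrm{d}w$. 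Plugging these into~\eqref{eq:souj-thm} yields, for $u>0$ and $0\le s<t-l$,
\[
P\Big\{\Gamma_{l,t}\in\mathrm{d}s\ \Big|\ \min_{0\le z\le l}B(z)>0,\ B(0)=u\Big\}
=\frac{\mathrm{d}s}{\pi\sqrt{s(t-l-s)}}\cdot
\frac{\int_0^\infty\big(e^{-\frac{(y-u)^2}{2l}}-e^{-\frac{(y+u)^2}{2l}}\big)e^{-\frac{y^2}{2s}}\,\mathrm{d}y}{2\int_0^u e^{-\frac{w^2}{2l}}\,\mathrm{d}w}\,.
\]

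The heart of the argument is the behaviour of the last ratio as $u\downarrow 0$. Both its numerator and denominator vanish at $u=0$, so by l'H\^opital's rule (equivalently, a first-order Taylor expansion in $u$, differentiating under the $y$-integral) the ratio tends to
\[
\frac12\int_0^\infty\frac{2y}{l}\,e^{-\frac{y^2}{2l}}e^{-\frac{y^2}{2s}}\,\mathrm{d}y
=\frac1l\int_0^\infty y\,e^{-\frac{y^2}{2}\left(\frac1l+\frac1s\right)}\,\mathrm{d}y
=\frac1l\cdot\frac{ls}{s+l}=\frac{s}{s+l}\,,
\]
which gives~\eqref{eq:souj-mdr-thm-ac}. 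The same conclusion can be reached more structurally: the normalised meander endpoint kernel converges to the Rayleigh density $\frac{y}{l}e^{-y^2/2l}$, which is the law of $B(l)$ under the meander measure $\nu^l_0$ (this is precisely the weak convergence $\nu^l_u\Rightarrow\nu^l_0$ recalled in the preliminaries, together with the Markov property of \autoref{lem:markov-cond}, by which the conditional process evolves as free Brownian motion on $[l,t]$); since $\omega\mapsto\int_l^t\mathbbm 1_{[0,\infty)}(\omega(s))\,\mathrm{d}s$ is continuous outside a $\nu^l_0$-null set (Brownian paths spend zero Lebesgue time at level $0$), the continuous mapping theorem identifies the weak limit with the right-hand side of~\eqref{eq:souj-mdr-thm-ac}.

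For the singular mass~\eqref{eq:souj-mdr-thm-sing} I would write $P\{\Gamma_{l,t}=t-l\}=1-\int_0^{t-l}\frac{1}{\pi\sqrt{s(t-l-s)}}\frac{s}{s+l}\,\mathrm{d}s$. The substitution $s=(t-l)\sin^2\theta$ turns the integral into $\frac2\pi\int_0^{\pi/2}\frac{(t-l)\sin^2\theta}{(t-l)\sin^2\theta+l}\,\mathrm{d}\theta$, and the elementary identity $\int_0^{\pi/2}\frac{\mathrm{d}\theta}{a\sin^2\theta+b}=\frac{\pi}{2\sqrt{b(a+b)}}$ (with $a=t-l$, $b=l$, so $a+b=t$) shows that this integral equals $1-\sqrt{l/t}$, hence $P\{\Gamma_{l,t}=t-l\}=\sqrt{l/t}$. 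As an independent check, passing directly to the limit in the singular part of~\eqref{eq:souj-thm} gives $\int_0^\infty\frac{y}{l}e^{-y^2/2l}\cdot\frac2{\sqrt{2\pi(t-l)}}\int_0^y e^{-\frac{w^2}{2(t-l)}}\,\mathrm{d}w\,\mathrm{d}y$, which, after one integration by parts (the boundary terms vanish, using $1/l+1/(t-l)=t/(l(t-l))$), reduces to $\frac2{\sqrt{2\pi(t-l)}}\int_0^\infty e^{-\frac{y^2}{2}\cdot\frac{t}{l(t-l)}}\,\mathrm{d}y=\sqrt{l/t}$ as well.

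The one genuinely delicate point is the interchange of $\lim_{u\downarrow 0}$ with the $y$-integration against the meander kernel, whose $u$-derivative is not bounded uniformly in $y$. I would handle this with a fixed integrable majorant of the form $C(1+y)e^{-cy^2}$ with $c>0$, obtained by applying the mean value theorem in $u$ to the numerator and a lower bound to $\int_0^u e^{-w^2/2l}\,\mathrm dw$; this dominates the integrand uniformly for small $u$, and is integrable against the bounded factor $e^{-y^2/2s}$, so dominated convergence applies. Everything else is routine: the inner $y$-integrals are Gaussian, and the integral for the singular mass is a classical arcsine-type integral. Two consistency checks are worth noting: the absolutely continuous and singular masses add up to $1$, and letting $l\downarrow 0$ recovers L\'evy's arcsine law $\mathrm{d}s/(\pi\sqrt{s(t-s)})$ for a Brownian motion started at the origin.
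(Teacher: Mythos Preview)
Your proof is correct and follows essentially the same route as the paper: specialise \eqref{eq:souj-thm} to $\mu=0$, let $u\downarrow 0$ so that the meander endpoint kernel collapses to the Rayleigh density $\tfrac{y}{l}e^{-y^2/2l}$, and evaluate the resulting Gaussian integrals for both the absolutely continuous and singular parts. Your write-up is somewhat more careful than the paper's---you justify the passage to the limit via dominated convergence, invoke the continuous mapping theorem explicitly, and give an independent check of the singular mass via $1-\int_0^{t-l}$ of the density---while for the direct computation of the atom you use integration by parts where the paper simply interchanges the order of integration, but these are cosmetic differences.
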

\begin{proof}
	By taking the limit for $ u \downarrow 0 $ and assuming
	$ \mu = 0 $ in \eqref{eq:souj-thm} we get
		
		\begin{align}\label{eq:souj-proof-b}
		&P\left\{  \Gamma_{l,t} \in \mathrm d s \Big |  \inf_{0\leq z\leq l} B(z) > 0, B(0)=0  \right\} 
		\\
		&=
		\lim_{u\downarrow 0}
		\frac{ \displaystyle
			\int_{0}^{\infty}
			\frac{e^{- \frac{ (y - u )^2}{2 l } } 
				- e^{  - \frac{ (y + u)^2}{2 l }   } }
			{ \sqrt{2\pi l } }  
			P\left\{ 
			\Gamma_{l,t}	\in \mathrm ds \Big| B(l) = y \right\}
			\mathrm d y
		} {\displaystyle
		\int_0^\infty \frac{e^{- \frac{ (y - u )^2}{2 l } } 
			- e^{  - \frac{ (y + u)^2}{2 l }   } }
		{ \sqrt{2\pi l } }  \mathrm d y
		} \, \mathrm d s \notag 
		\\
		&=
		\mathrm ds
		\int_{0}^{\infty}
		\frac yl e^{ - \frac{y^2}{2l}}
		\frac{ e ^ { - \frac{ y^2}{2s}}}{\pi \sqrt{s(t-l-s)}  } 
		\, \mathrm d w \notag \\
		&=
		\frac{\mathrm d s }{\pi  \sqrt {s( t- l - s)}} \,  \frac{s }{ s + l} \qquad 0 \leq s <t-l
		\notag 		
		\end{align}
		For $ s = t-l $, $ \mu = 0 $ and letting 
		$ u \downarrow 0 $ we have instead
		\begin{align}\label{eq:souj-proof-c}
		&
		\lim_{u \downarrow 0}
		P\left\{  \Gamma_{l,t}  = t-l \Big |  \min_{0\leq z\leq l} B(z) > 0, B(0)=u  \right\}
		\\
		&=
		\int_{0}^{\infty}
		\frac{ w}{l} e^{ - \frac{w^2}{2l}}
		\int_{0}^{w}
		\frac{2 }{ \sqrt{2 \pi (t-l) }}
		e ^ { - \frac{ z^2}{2(t-l)}} \mathrm d z
		\mathrm d w
		 \notag \\
		&=
		\frac{2 }{ \sqrt{2 \pi (t-l) }}
		\int_{0}^{\infty}
		e ^ { - \frac{ z^2}{2(t-l)}} 
		\int_{z}^{\infty}
		\frac wl e^{ - \frac{w^2}{2l}}  \,\mathrm d w \,\mathrm d z \notag \\
		&=
		\frac{2 }{ \sqrt{2 \pi (t-l) }}
		\int_{0}^{\infty}
		e ^ { - \frac{ z^2}{2(t-l)} - \frac{ z^2}{2 l}} 
		\,\mathrm d z \notag \\	 
		&= 
		\sqrt  \frac lt \notag 
		\end{align}
\end{proof}

\begin{remark}
	The distribution \eqref{eq:souj-mdr-thm-ac} can also be obtained as 
	\begin{align*}
		&\int_0^\infty P\left\{  B(l) \in \mathrm d y \Big | \inf_{ 0< z \leq l} B(s) > 0, B(0) = 0
		 \right\}
		P\left\{   \Gamma_{l,t} \in \mathrm d s \Big | B(l) = y  \right\}   
		\\
		&=
		\mathrm d s \int_0^\infty \frac yl e^{ - \frac{y^2}{2l}} 
		\frac{e^{ - \frac{y^2}{2s}} }{\pi \sqrt{s (t-l-s)}}
		\mathrm d y
		=  
		\frac{\mathrm ds }{\pi  \sqrt{s (t-l-s)}} \frac{s}{s+l}
	\end{align*}
	This is because at time $ l $ the meander has a Rayleigh distributed position and
	then formula \eqref{eq:souj-dist-start-ac} is applied.
\end{remark}

\begin{remark}
	We can write the distribution function
	of $ \Gamma_{l,t}  $ as follows
	\begin{align*}
	P\left\{ \Gamma_{l,t} < z
	 \Big |  \inf_{0\leq z\leq l} B(z) > 0, B(0)=0 
	 \right\} &=
	\frac 1\pi \int_{0}^{z} \sqrt \frac{s}{t-l-s}\, \frac{\mathrm d s }{l+s} 
	\\
	&=
	\frac 1\pi \int_{0}^{\frac{z}{t-l}} \sqrt \frac{s}{1-s}\, \frac{ t-l }{l+ (t-l)s} \mathrm d s \\
	&=
	2\frac {t-l}\pi \int_{0}^{\arcsin \sqrt \frac{z}{t-l}} 
	\frac{  \sin^2 \varphi }{t \sin^2 \varphi + l \cos^2 \varphi } \mathrm d \varphi \\
	&=
	\frac {2}\pi (t-l) \int_{0}^{\arcsin \sqrt \frac{z}{t-l}} 
	\frac{  \tan^2 \varphi }{t \tan^2 \varphi + l  } \mathrm d \varphi \\
	&=
	\frac {2}\pi (t-l) \int_{0}^{ \sqrt \frac{z}{t-l-z}} 
	\frac{ y^2 }{t y^2 + l  } \cdot \frac{\mathrm d y}{1 + y^2}  \\  
	&=
	\frac {2}\pi  \int_{0}^{\sqrt \frac{z}{t-l-z}} 
	\left\{ 
	\frac{1}{1 + y^2} - \frac{ l }{t y^2 + l  }  \right\} \, \mathrm d y\\ 
	&=
	\frac 2 \pi \arctan \sqrt \frac{z}{t-l-z} 
	- \frac 2 \pi \sqrt \frac lt \arctan \left( \sqrt \frac tl \sqrt \frac{z}{t-l-z}\right)
	\\
	&=
	\frac 2 \pi \arcsin \sqrt \frac z {t-l} - \frac 2 \pi \sqrt \frac lt
	\arcsin \sqrt \frac{zt}{(l+z)(t-l)}
	\end{align*}
	for $  0 < z < t-l $.
	Clearly for $ z = t-l $ we have that
    \[
    P\left\{ \Gamma_{l,t}< t-l\right\} = 1 - \sqrt \frac lt.
    \]
\end{remark}
%
%
%
%

\section{Sojourn time of the generalized excursion process with drift}

We now consider the conditional process $ B^\mu| \Lambda_{u,0}^l$ where
\begin{equation}\label{eq:souj-exc-cond}
\Lambda_{u,0}^l = \{  \omega \in C[0,t]:  \min_{ 0\leq z \leq l} \omega(z) > 0, \omega(0) = u , \omega(t)=0\}
\end{equation}

We introduce the family of measures $\{ \nu_{u,0}^l , u>0  \}$ defined as
\[
\nu_{u,0}^l(A) = P\Big(B^\mu \in A \Big | \Lambda_{u,0}^l \Big ) \qquad A \in \mathscr C, \quad u > 0
\]

Arguing as in \autoref{thm:weak-conv-mdr} we have that the weak convergence
$
\nu_{u,0}^l \Rightarrow_{u \downarrow 0} \nu_{0,0}^l
$
holds 
where 
\[
\nu_{0,0}^l(A) = P\Big(B^\mu \in A \Big | \inf_{ 0<  z \leq l} B^\mu(z) > 0, B^\mu(0) = 0 , B^\mu(t)=0\Big ) \qquad A \in \mathscr C, \quad u > 0.
\]

We study below the sojourn time $\Gamma^\mu_{l,t} = \int_{l}^{t} \mathbbm 1_{[0, \infty)}( B^\mu(s)) \mathrm d s  $ under the condition $\Lambda_{u,0}^l$, i.e.  when the process is conditioned to
remain positive up to time $ l $ and to return to zero at  a subsequent time $ t $.
In particular, we give the explicit limiting distribution 
of the sojourn functional $\Gamma^\mu_{l,t}$ under the condition $ \Lambda_{0,0}^l $, that is  when $ u $ approaches the zero level.

It is well known that the excursion process with drift, 
that is the bridge of the Brownian meander, is not affected by 
the drift $ \mu $ (see for example \cite{iafrate19}). We here show that this is also true for the 
distribution of the sojourn time $\Gamma^\mu_{l,t}$. 

The explicit distribution of $\Gamma^\mu_{l,t} $
under the 
condition \eqref{eq:souj-exc-cond} is 
\begin{align*}\label{eq:souj-exc-dist}
	& \numberthis 
	P\left\{ \Gamma_{l,t}^\mu \in \mathrm d s
	\Big | \min_{ 0\leq z \leq l} B^\mu(z) > 0, B^\mu(0) = u, B^\mu(t)  = 0
	\right\} 
	\\
	&= \lim\limits_{w \downarrow 0}
	\frac{
	\int_0^\infty 
	P\left\{ 
	 \min_{ 0\leq z \leq l} B^\mu(z) > 0, B^\mu(l) \in \mathrm d y
	 \Big | B^\mu(0) = u\right\}
	 P\left\{ \Gamma_{l,t}^\mu \in \mathrm d s , B^\mu(t) \in \mathrm dw
	 \Big | B^\mu(l) = y \right\}
	}{
	\int_0^\infty 
	P\left\{ 
	\min_{ 0\leq z \leq l} B^\mu(z) > 0, B^\mu(l) \in \mathrm d y
	\Big | B^\mu(0) = u \right\}
	P\left\{ 
	B^\mu(t) \in \mathrm d w \Big | B^\mu(l) = y \right\}
	}
\end{align*}

Clearly

\begin{align*}\label{eq:conditional-min}
	P\left\{ \min_{0\leq z\leq l} B^\mu(z) > 0, 
	B^\mu(l) \in \mathrm d y \Big | B^\mu(0) = u \right\}
\numberthis 
&=
	\frac{1}{\sqrt{2 \pi l}}\left(
	e^{-\frac{(y-u)^2}{2l}} - e^{-\frac{(y+u)^2}{2l}} \right)
	e^{ - \frac{\mu^2 l}{2} + \mu (y-u) } \,\mathrm d y
\end{align*}

%
For our analysis we must consider that

\begin{align*}\label{eq:conditional-min-lim}
 \numberthis 
&\lim\limits_{u \downarrow 0}
\frac{
	P\left\{ 
	\min_{ 0\leq z \leq l} B^\mu(z) > 0, B^\mu(l) \in \mathrm d y
	\Big | B^\mu(0) = u\right\}
}{
	\int_0^\infty 
	P\left\{ 
	\min_{ 0\leq z \leq l} B^\mu(z) > 0, B^\mu(l) \in \mathrm d y
	\Big | B^\mu(0) = u \right\}
	P\left\{ 
	B^\mu(t) \in \mathrm d w \Big | B^\mu(l) = y \right\}
}
\\
&=
\frac{
	\frac{y}{l\sqrt{2 \pi l}} e^{ - \frac{y^2}{2l} - \frac{\mu^2l}{2} + \mu y}
}{
	\int_0^\infty 
	\frac{y}{l\sqrt{2 \pi l}} e^{ - \frac{y^2}{2l} - \frac{\mu^2l}{2} + \mu y}
	\frac{e^{ - \frac{(w-y)^2}{2(t-l)} }}{\sqrt{2 \pi (t-l)}} e^{ - \frac{\mu^2}{2}(t-l) + \mu (w-y)}
	\,\mathrm dy
} \,\mathrm d y
\end{align*}
We give an argument to prove that
\begin{align}
	 P\left\{ \Gamma_{l,t}^\mu \in \mathrm d s , B^\mu(t) \in \mathrm dw
	\Big | B^\mu(l) = y \right\}
	=
	 P\left\{ \Gamma_{l,t}^{-\mu} \in \mathrm d s , B^{-\mu}(t) \in \mathrm dy
	\Big | B^{-\mu}(l) = w \right\}
\end{align}

where interchanging the starting point $ y $ with the final
position $ w $ implies the change of sign in the drift. 

This can be intuitively inferred from the conditional Feynman--Kac functional
written as

\begin{align*}
	& \numberthis 
	\mathbb E \left\{
	e^{   - \int_0^t k(B^\mu(s)) \,\mathrm d s }
	\Big |B^\mu (0) = u, B^\mu(t) = 0
	\right\} 
	\\ \qquad 
	&= 
	\lim\limits_{n \to \infty} 
	\idotsint
	e^{  - \sum_{j=1}^n V(x_j)(t_{j+1} - t_j)  }
	P\left\{  \bigcap_{j =1}^n (B^\mu(s_j) \in \mathrm d x_j) 
	\Big | B^\mu (0) = u, B^\mu(t) = 0 \right\}
\end{align*}

for $  0 = t_0 < t_1 < \dots < t_n < t_{n+1} = t $, and then interchanging past and future times.  
From formula \eqref{eq:souj-joint}
\begin{align*}
\numberthis 
	 P\left\{ \Gamma_{l,t}^\mu \in \mathrm d s , B^\mu(t) \in \mathrm d0
	\Big | B^\mu(l) = y \right\}
	&=
	P\left\{ \Gamma_{l,t}^{-\mu} \in \mathrm d s , B^{-\mu}(t) \in \mathrm dy
	\Big | B^{-\mu}(l) = 0 \right\}
	\\
	&=
	\frac{\mathrm d s \,\, \mathrm dy}{2 \pi}
	e^{ - \frac{\mu^2}{2} (t-l) - \mu y} \int_{t-l-s}^{t-l}
	y \frac{e^{ - \frac{y^2}{2(t-l-z)}}}{\sqrt{z^3 (t-l-z)^3} } \mathrm d z
	\\
	&=
	\frac{\mathrm d s \,\, \mathrm dy}{2 \pi}
	e^{ - \frac{\mu^2}{2} (t-l) - \mu y} 
	\int_{0}^{s}
	y \frac{e^{ - \frac{y^2}{2z}}}{\sqrt{z^3 (t-l-z)^3} } \mathrm d z
\end{align*}

In view of all the arguments above the limit of \eqref{eq:souj-exc-dist}
for $ u \to 0 $ yields

\begin{align*}\label{eq:souj-exc-proof}
	& \numberthis 
	\lim\limits_{u\downarrow 0}
	P\left\{ \Gamma_{l,t}^\mu \in \mathrm d s
	\Big | \min_{ 0\leq z \leq l} B^\mu(z) > 0, B^\mu(0) = u, B^\mu(t)  = 0
	\right\} 
	\\
	&=
	\lim\limits_{u\downarrow 0}
	\frac{
		\int_0 ^\infty 
		\frac{1}{\sqrt{2 \pi l}}\left(
		e^{-\frac{(y-u)^2}{2l}} - e^{-\frac{(y+u)^2}{2l}} \right)
		e^{ - \frac{\mu^2 l}{2} + \mu (y-u) }
		e^{ - \frac{\mu^2 (t-l)}{2} - \mu y }
		\int_{0}^{s}
		y \frac{e^{ - \frac{y^2}{2z}}}{\sqrt{z^3 (t-l-z)^3} } \mathrm d z
		\mathrm d y
	}{
		\int_0^\infty 
		\frac{1}{\sqrt{2 \pi l}}\left(
		e^{-\frac{(y-u)^2}{2l}} - e^{-\frac{(y+u)^2}{2l}} \right)
		e^{ - \frac{\mu^2 l}{2} + \mu (y-u) }
		e^{ -\frac{(0-y - \mu (t-l))^2}{2(t-l)} }
		\,
		\frac{\mathrm dy}{\sqrt{2 \pi (t-l)}}
	}
	\frac{ \mathrm d s}{2 \pi}
	\\
	&=
	\sqrt \frac{t-l}{2\pi} 
	\frac{ 
		\int_0^\infty y e^{ - \frac{y^2}{2l}} \,\mathrm d y
		\int_{0}^{s}
		y \frac{e^{ - \frac{y^2}{2z}}}{\sqrt{z^3 (t-l-z)^3} } \mathrm d z
	}{
		\int_0^\infty y e^{ - \frac{y^2}{2l}} 
		 e^{ - \frac{y^2}{2(t-l)}} \,\mathrm d y
	}
	\mathrm d s
	\\
	&=
	\frac{t}{2l \sqrt{t-l}} \int_0^s
	\frac{\mathrm d w}{\sqrt{w^3(t-l-w)^3}} 
	\left(\sqrt{\frac{lw}{l+w}} \right)^3
	\mathrm d s
	\\
	&=
	\frac t2 \sqrt \frac {l}{t-l} 
	\int_0^s
	\frac{\mathrm d w}{\sqrt{(t-l-w)^3(l+w)^3}} 
	\mathrm d s
\\
	&=
\frac t2 \sqrt \frac{l } {t-l} 
\int_{l}^{l+s}
\frac{\mathrm d w} {\sqrt{w^3 (t-w)^3 }}
\mathrm d s
\\
&=
\frac 1t \sqrt \frac{l } {t-l}  
\int_{ \arcsin \sqrt \frac lt }^{ \arcsin \sqrt \frac{l+s}s}
\frac{ \mathrm d \varphi}{\sin^2 \varphi \cos^2 \varphi }
\mathrm d s\\
&=
\frac 1t \sqrt \frac{l } {t-l}
\left\{
\frac{ \sqrt{   l + s }}{  \sqrt{  t - l -s  }} - 
\sqrt \frac{l } {t-l} -
\frac{ \sqrt{  t - l -s  }}{ \sqrt{ l+s}  } + 
\sqrt \frac{t-l}{l}
\right\} \mathrm d s\\
&=
\frac 1t \sqrt \frac{l } {t-l}
\left\{
\frac{t - 2l}{  \sqrt{  l(t - l) }} - 
\frac{t - 2(l + s)}{  \sqrt{  (l+s)(t - l -s) }}  
\right\} \mathrm d s
\qquad 0 < s <t-l . 
\end{align*}

This proves the following theorem. 

\begin{theorem}
	The conditional distribution of $ \Gamma^\mu_{l,t} $ under $ \mu_{0,0}^l $ is given by
	\begin{align*}\label{eq:souj-exc-thm}
	\numberthis
	P\left\{  \Gamma^\mu_{l,t} \in \mathrm d s \Big | \inf_{0< z\leq l} B(z) > 0, B(0)=0 , B(t)=0 \right\} 
	&=
	\frac t2 \sqrt \frac{l } {t-l} 
	\int_{l}^{l+s}
	\frac{\mathrm d w} {\sqrt{w^3 (t-w)^3 }} \,\mathrm d s 
	\\
	&= 
	\frac 1t \sqrt \frac{l } {t-l}
	\left\{
	\frac{t - 2l}{  \sqrt{  l(t - l) }} - 
	\frac{t - 2(l + s)}{  \sqrt{  (l+s)(t - l -s) }}  
	\right\} \mathrm d s  \, .
	\end{align*}
\end{theorem}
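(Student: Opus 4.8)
The plan is to obtain the stated density as the $u\downarrow 0$ limit of the (explicitly computable) conditional density of $\Gamma^\mu_{l,t}$ under $\Lambda^l_{u,0}$, the limiting object being the law under $\Lambda^l_{0,0}$ in view of the weak convergence $\nu^l_{u,0}\Rightarrow\nu^l_{0,0}$ recorded above. First I would condition on the value $B^\mu(l)=y$ at the gluing time and use the Markov property of $B^\mu$ at time $l$ to split the conditioning event $\{\min_{0\leq z\leq l}B^\mu(z)>0,\,B^\mu(0)=u,\,B^\mu(t)=0\}$ into a $[0,l]$-part and the terminal constraint. This yields the ratio representation \eqref{eq:souj-exc-dist}: the numerator is $\int_0^\infty P\{\min_{0\leq z\leq l}B^\mu(z)>0,B^\mu(l)\in\mathrm dy\mid B^\mu(0)=u\}\,P\{\Gamma^\mu_{l,t}\in\mathrm ds,B^\mu(t)\in\mathrm dw\mid B^\mu(l)=y\}$, and the denominator the same with the sojourn factor replaced by the free transition density $P\{B^\mu(t)\in\mathrm dw\mid B^\mu(l)=y\}$, everything evaluated in the limit $w\downarrow 0$.

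For the killed transition density on $[0,l]$ I would use the reflection principle (together with the Girsanov factor) to get \eqref{eq:conditional-min}. The delicate ingredient is the joint density of $(\Gamma^\mu_{l,t},B^\mu(t))$ given $B^\mu(l)=y$: I claim it equals the joint density of $(\Gamma^{-\mu}_{l,t},B^{-\mu}(t))$ given $B^{-\mu}(l)=w$ after interchanging the starting point $y$ with the terminal point $w$, the cost of the interchange being a sign change in the drift. I would justify this by writing the conditional Feynman--Kac functional $\mathbb E\{e^{-\int_0^t k(B^\mu(s))\mathrm ds}\mid B^\mu(0)=u,B^\mu(t)=0\}$ as a limit of finite-dimensional integrals against the bridge law over a partition $0=t_0<\dots<t_{n+1}=t$ and reversing past and future times, the Brownian bridge being invariant under time reversal up to the drift flip. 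Specializing formula \eqref{eq:souj-joint} with terminal value $0$ then gives $P\{\Gamma^\mu_{l,t}\in\mathrm ds,B^\mu(t)\in\mathrm d0\mid B^\mu(l)=y\}=\frac{\mathrm ds\,\mathrm dy}{2\pi}e^{-\mu^2(t-l)/2-\mu y}\int_0^s y\,e^{-y^2/2z}/\sqrt{z^3(t-l-z)^3}\,\mathrm dz$, and the free transition density supplies the denominator in closed form.

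With these explicit forms inserted, the limit $u\downarrow 0$ is routine. All the drift exponentials collapse to the common factor $e^{-\mu^2 t/2-\mu u}$, which cancels between numerator and denominator (this makes the drift-independence transparent), and $e^{-(y-u)^2/2l}-e^{-(y+u)^2/2l}\sim \tfrac{2uy}{l}e^{-y^2/2l}$, so the factor $u$ cancels as well, leaving the Rayleigh weight $\tfrac yl e^{-y^2/2l}$ for the position at time $l$. Interchanging the order of integration and using the Gaussian moments $\int_0^\infty y\,e^{-y^2/2l-y^2/2(t-l)}\,\mathrm dy=l(t-l)/t$ and $\int_0^\infty y^2 e^{-y^2(1/2l+1/2z)}\,\mathrm dy=\tfrac{\sqrt\pi}{4}\big(2lz/(l+z)\big)^{3/2}$, all constants cancel and one is left with $\tfrac t2\sqrt{\tfrac{l}{t-l}}\int_0^s\big((l+w)(t-l-w)\big)^{-3/2}\,\mathrm dw\,\mathrm ds$. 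The shift $w\mapsto w-l$ turns this into $\tfrac t2\sqrt{\tfrac{l}{t-l}}\int_l^{l+s}\frac{\mathrm dw}{\sqrt{w^3(t-w)^3}}\,\mathrm ds$, which is the first claimed form, and the substitution $w=t\sin^2\varphi$ reduces the inner integral to $\tfrac1{t^2}\int(\sin^2\varphi\cos^2\varphi)^{-1}\,\mathrm d\varphi$, an antiderivative of which is $\tan\varphi-\cot\varphi$; evaluating between $\varphi=\arcsin\sqrt{l/t}$ and $\varphi=\arcsin\sqrt{(l+s)/t}$ produces the closed form $\tfrac1t\sqrt{\tfrac{l}{t-l}}\{(t-2l)/\sqrt{l(t-l)}-(t-2(l+s))/\sqrt{(l+s)(t-l-s)}\}$ in the statement.

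The step I expect to be the main obstacle is making the time-reversal / drift-flip identity for the joint law of $(\Gamma^\mu_{l,t},B^\mu(t))$ rigorous, and, to a lesser extent, justifying the interchange of the limit $u\downarrow 0$ with the integrals by a dominated-convergence estimate; the remaining Gaussian integrals and the trigonometric reduction are elementary.
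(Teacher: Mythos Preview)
Your proposal is correct and follows essentially the same route as the paper: the same Markovian decomposition at time $l$ leading to the ratio representation \eqref{eq:souj-exc-dist}, the same time-reversal/drift-flip identity to access the joint law of $(\Gamma^\mu_{l,t},B^\mu(t))$ at the terminal value $0$ via \eqref{eq:souj-joint}, the same $u\downarrow 0$ limit producing the Rayleigh weight and the drift cancellation, and the same Gaussian integrals followed by the substitution $w=t\sin^2\varphi$ and the antiderivative $\tan\varphi-\cot\varphi$. The paper is equally informal about the two points you flag (the time-reversal identity and the interchange of limit and integral), so there is no gap relative to the original argument.
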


\begin{remark}
	We note that the distribution of $ \Gamma^\mu_{l,t} $ in the case of 
	the generalized excursion has not a singular component (differently from the meander).
	Furthermore the distribution \eqref{eq:souj-exc-thm} can be regarded as that of the sojourn time 
	on $ [0, \infty) $ of a Brownian bridge with a Rayleigh distributed starting point
	during an interval of length $ t-l $.
	Moreover, we remark that for
	 $l = \frac t2$ the density \eqref{eq:souj-exc-thm} simplifies as
	\begin{align}\label{eq:souj-cond-d}
	& P\left\{  \Gamma^\mu_{\frac t2,t} \in \mathrm d s \Big |  \inf_{0< z\leq t/2} B(z) > 0, B(0)=0 , B(t)=0 \right\}
	= \\
	&= \frac{ 4s}{ t \sqrt{ t^2 - 4s^2}} \mathrm d s \qquad 0<s<\frac t2 \notag.
	\end{align}
	
	We observe that for $l = 0$ we retrieve in \eqref{eq:souj-exc-thm} the uniform distribution. 
	\vskip 1cm
\end{remark}

%

%
%

\begin{remark}

We are also able to show that the mean value of $\Gamma_{l,t}$
under  $ \mu_{0,0}^l $ is
\begin{align}\label{eq:souj-exp}
&\mathbb E \left( \Gamma_{l,t}^\mu
\Big |  \inf_{0< z\leq l} B(z) > 0, B(0)= B(t)=0 \right )
= \\
&=
\frac t2 \sqrt \frac{l}{t- l}
 \arccos \sqrt \frac lt  + 
\frac{t-2l}{2} \notag 
\end{align}

\begin{proof}
	
In view of an intermediate formula in  \eqref{eq:souj-exc-proof} we have that
\begin{align*}
& \mathbb E \left( \Gamma_{l,t}^\mu
\Big |  \inf_{0< z\leq l} B(z) > 0, B(0)= B(t)=0 \right )
\\
&= \frac t2 \sqrt \frac{l}{t- l}
\int_{0}^{t-l} s
\int_0^s
\frac{\mathrm d w} {\sqrt{(t-l-w)^3 (l+w)^3}}  \,\mathrm d s =\\
& = 
\frac t2 \sqrt \frac{l}{t- l} 
\int_{0}^{t-l}
\frac{\mathrm d w} {\sqrt{(t-l-w)^3 (l+w)^3}} 
\int_{w}^{t-l} s
\,\mathrm d s =\\
& = 
\frac t4 \sqrt \frac{l}{t- l} 
\int_{0}^{t-l}
\frac{(t-l)^2 -w^2} {\sqrt{(t-l-w)^3 (l+w)^3}} \, \mathrm d w
\\
& = 
\frac t4 \sqrt \frac{l}{t- l} 
\int_{0}^{t-l}
\frac{t-l +w} {\sqrt{(t-l-w) (l+w)^3}} \, \mathrm d w
\\
& = 
\frac 12 \sqrt \frac{l}{t- l} 
\int_{\arcsin \sqrt \frac lt}^{\frac \pi 2}
\left\{ \frac{t-2l} {\sin^2 \varphi }  + t\right\}\, \mathrm d \varphi 
\\
&=
\frac t2 \sqrt \frac{l}{t- l}
\left(
\frac \pi 2 - \arcsin \sqrt \frac lt 
\right) + 
\frac{t-2l}{2}
\end{align*}

\end{proof}
For $l=0$ $\mathbb E \Gamma_{0,t}^\mu = \frac t2$, while in the special case where $l = \frac t2$
we have $\mathbb E \Gamma^\mu_{\frac t2,t} = \frac{t\pi}8$.

\end{remark}

\begin{remark}

The distribution function of $\Gamma_{l,t}^\mu$ writes
\begin{align*}
&P\left\{ \Gamma_{l,t}^\mu < \bar{s}
\Big |  \inf_{0 < z\leq l} B^\mu(z) > 0, B^\mu(0)= B^\mu(t)=0 
\right\}  = 
\numberthis \label{eq:souj-dist-fun} \\
&= \frac t2 \sqrt \frac{l}{t- l}
\int_{0}^{\bar s} 
\mathrm d s 
\int_0^s
\frac{\mathrm d w} {\sqrt{(t-l-w)^3 (l+w)^3}}  =\\
& = 
\frac t2 \sqrt \frac{l}{t- l} 
\int_{0}^{\bar s}
\frac{\bar s - w} {\sqrt{(t-l-w)^3 (l+w)^3}} 
\, \mathrm d w\\
&=
\frac t2 \sqrt \frac{l}{t- l} 
\int_{ \arcsin \sqrt \frac lt }^{ \arcsin \sqrt \frac{l+\bar s}t}
\frac{
	\bar s + l - t \sin^2 \varphi
	}{
	\sqrt{(t\sin^2 \varphi)^3 (t\cos^2 \varphi)^3}
	}
2t \sin \varphi \cos\varphi
\, \mathrm d \varphi \\
&=
\frac 1t \sqrt \frac{l}{t- l} 
\int_{ \arcsin \sqrt \frac lt }^{ \arcsin \sqrt \frac{l+\bar s}t}
\left[ \frac{\bar s + l }{\sin^2\varphi \cos^2 \varphi} - 
	\frac{t}{\cos^2\varphi}\right]
\,\mathrm d \varphi
\\
&=
\frac{\bar s + l}t \sqrt \frac{l}{t- l} 
\int_{ \arcsin \sqrt \frac lt }^{ \arcsin \sqrt \frac{l+\bar s}t}
\left[ \frac{\mathrm d}{\mathrm d \varphi} \tan \varphi - 
	\frac{\mathrm d}{\mathrm d \varphi} \cot \varphi \right]
	\,\mathrm d \varphi - 
\sqrt \frac{l}{t- l} 
\int_{ \arcsin \sqrt \frac lt }^{ \arcsin \sqrt \frac{l+\bar s}t}
 \frac{\mathrm d}{\mathrm d \varphi} \tan \varphi 
	\,\mathrm d \varphi
\\
&=	
\frac{\bar s + l}t \sqrt \frac{l}{t- l} 
\left(
	\sqrt \frac{l + \bar s}{t - (l + \bar s)}  - 
	\sqrt \frac{l}{t-l} - 
	\sqrt \frac{t - (l+\bar s)}{l + \bar s} + 
	\sqrt \frac{t-l}{l}
 \right) \\
 & \qquad -
 \sqrt \frac{l}{t- l} 
 \left(
 	\sqrt \frac{l + \bar s}{t - (l +\bar s) } -
 	\sqrt \frac{l}{t - l}
 \right) \\
 &=
 \frac{(\bar s + l) (t - 2l)}{t(t-l)}  + \frac{l}{t-l} - 
 \frac 2t \sqrt  \frac{l}{t-l} \sqrt{(l+\bar s) (t - l - \bar s)}
\end{align*}
for $ 0< \bar s < t-l$ . 
We note that for $ \bar s = t - l  $, this yields the value 1 as expected. 
From \eqref{eq:souj-dist-fun} it emerges that 
\begin{equation*}
\lim\limits_{t\to \infty }
P\left\{  \frac{ \Gamma_{l,t}^\mu } {t} < \bar{s} 
\Big |  \min_{0\leq z\leq l} B^\mu(z) > 0, B^\mu(0)= B^\mu(t)=0 
\right\} 
= \bar s \qquad 0 < \bar s <1
\end{equation*}
that is the random ratio $ \frac{\Gamma_{l,t}^\mu}{t} $ is 
asymptotically uniform on $[0,1]$.

For $ l= \frac t2$ the distribution function becomes
\[ P\left\{ \Gamma^\mu_{\frac t2,t} < \bar{s} \right\}  = 
1 - \frac{ \sqrt{t^2 - 4 \bar s^2}}{t} \qquad 0 \leq \bar s \leq \frac t2 \]

\end{remark}


%
%
%
%
%
%
%
%
%
%

\bibliographystyle{abbrvnat}
\bibliography{biblio}

\end{document}